% 2011-03-27 by M.H. Revision 4
% 2011-03-04 by M.H. Revision 3
% 2011-03-02 by M.H. Revision 2
% 2011-03-01 by M.H. Revision 1
% 2010-11-24 by M.H. (Submitted Version)
%
\documentclass[12pt]{article}
\usepackage{amssymb,amsthm,amsmath,latexsym}
\newtheorem{thm}{Theorem}
\newtheorem{prop}[thm]{Proposition}
\newtheorem{lem}[thm]{Lemma}

\theoremstyle{remark}
\newtheorem{rem}[thm]{Remark}
%%%%%%%%%%%%
\newcommand{\FF}{\mathbb{F}}
\newcommand{\ZZ}{\mathbb{Z}}

\newcommand{\cB}{\mathcal{B}}
\newcommand{\cD}{\mathcal{D}}
\newcommand{\cP}{\mathcal{P}}
\DeclareMathOperator{\Aut}{Aut}

\begin{document}

\title{On the Classification of Weighing Matrices and 
Self-Orthogonal Codes}

\author{
Masaaki Harada\thanks{
Department of Mathematical Sciences,
Yamagata University,
Yamagata 990--8560, Japan, and
PRESTO, Japan Science and Technology Agency, Kawaguchi,
Saitama 332--0012, Japan. 
email: mharada@sci.kj.yamagata-u.ac.jp}
and 
Akihiro Munemasa\thanks{
Graduate School of Information Sciences,
Tohoku University,
Sendai 980--8579, Japan.
email: munemasa@math.is.tohoku.ac.jp}
}

\maketitle

\begin{abstract}
We provide a classification method of
weighing matrices based on a classification of self-orthogonal 
codes.
Using this method,
%% we complete a classification of weighing matrices of
we classify weighing matrices of
orders up to $15$ and order $17$, by revising
some known classification.
In addition, we give a revised classification of weighing matrices
of weight $5$.
A revised classification of ternary maximal 
self-orthogonal codes of lengths $18$ and $19$
is also presented.
\end{abstract}

%%%%%%%%%%%%%%%%%%%%%%%%%%%%%%
\section{Introduction}\label{Sec:1}

A weighing matrix $W$ of order $n$ and weight $k$ is an 
$n \times n$ $(1,-1,0)$-matrix $W$ such that 
$W W^T=kI_n$, where $I_n$ is 
the identity matrix of order $n$ and $W^T$ denotes the transpose of $W$.  
A weighing matrix of order $n$ and weight $n$
%$W(n,n)$ 
is also called a Hadamard matrix.  
We say that two weighing matrices $W_1$ and $W_2$ of order $n$ and 
weight $k$ are {\em equivalent} if there exist 
$(1,-1,0)$-monomial matrices $P$ and $Q$ with 
$W_1=P W_2 Q$.

Chan, Rodger and Seberry~\cite{CRS86} began a classification
of weighing matrices and they classified
all weighing matrices of weight $k \le 5$ and all
weighing matrices of orders $n \le 11$.
%% Ohmori \cite{O89} and \cite{O93} completed a classification of
Ohmori~\cite{O89} and \cite{O93} classified
weighing matrices of orders $12$ and $13$, respectively.
At order $14$, weighing matrices of weights $k \le 8$ and
$13$ were classified in~\cite{CRS86} and~\cite{O92}, respectively.
At order $17$, all weighing matrices of weight $9$ with
intersection number $8$ were classified in~\cite{OM98}.

In this paper, we 
extend the classification of weighing matrices using the 
known classification of self-orthogonal codes.
Let $\ZZ_{m}$ be the ring of integers modulo $m$, where $m$
is an integer greater than $1$.
Let $W$ be a weighing matrix of order $n$ and weight $k$,
and suppose that $m$ is a divisor of $k$. 
If we regard the entries of $W$ as elements of $\ZZ_m$,
then the rows of $W$ generate a self-orthogonal $\ZZ_m$-code.
This means that $W$ can be regarded as a subset of codewords
in some maximal self-orthogonal code. For example, a classification
of weighing matrices of order $16$ and weight $6$ can be derived
from the known classification of ternary self-dual codes of length
$16$ given in~\cite{CPS}.

The paper is organized as follows. In Section~\ref{Sec:code},
we review the known classification of maximal self-orthogonal codes
needed for our classification of weighing matrices. It turns out
that there are errors in the classification of ternary maximal
self-orthogonal codes of lengths $18$ and $19$ given in~\cite{PSW},
and we correct them.
In Section~\ref{Sec:Method}, we give a detailed description of our
%provide a 
classification method of
weighing matrices of order $n$ and weight $k$
based on the classification of self-orthogonal $\ZZ_m$-codes
of length $n$, where $m$ is a divisor of $k$.
% (Section \ref{Sec:Method}).
Our method, applied to the known classification of self-dual 
$\FF_5$-codes of length $12$, 
leads to a classification of weighing
matrices of order $12$ and weight $5$. This reveals an omission
%In the course of producing and rechecking
%known classifications of ternary maximal self-orthogonal codes,
%we discovered errors in the classification for lengths $18$ and
%$19$.  In Section \ref{Sec:code}, we give a revised classification
%of such codes.
%Moreover, in the course of rechecking 
%a classification of weighing matrices of order $12$ using
%this method,
%we discovered errors in the classification 
%of weighing matrices of weight $5$ given in
in the classification given in~\cite[Theorem 5]{CRS86}, and 
a revised classification of 
weighing matrices of weight $5$ for all orders
is given in Section~\ref{Sec:wt5}, while 
a revised classification of 
weighing matrices of order $12$ for all weights
is given in Section~\ref{Sec:12}.
%% In Section \ref{Sec:141517}, we complete a classification of weighing 
In Section~\ref{Sec:141517}, we classify weighing 
matrices of orders $14,15$ and $17$.
%by giving a revised 
%classification for 
Again, there is an error in the number of weighing matrices of 
order $14$ and weight $8$ given in~\cite[Theorem~3]{O89},
and we correct it.
This completes a classification of weighing matrices of 
orders $n \le 17$ except $n = 16$.
%% For others orders and weights,
Weighing matrices of order $n$ and
$k$ are also classified for
\[
(n,k)=(16,6), (16,9), (16,12), (18,9)
\]
in Section~\ref{Sec:other}.
All weighing matrices given in this paper
can be obtained electronically from~\cite{Data}.

%%%%%%%%%%%%%%%%%%%%%%%%%%%%%%%%
\section{Maximal self-orthogonal codes}
\label{Sec:code}
%%%%%%%%%%%%%%%%%%%%%%%%%%%%%%%%
\subsection{Codes}
%Let $\ZZ_{m}$ be the ring of integers modulo $m$, where $m$
%is an integer greater than $1$.
We shall exclusively deal with the case 
$\ZZ_p=\FF_p$ and $\ZZ_4$, 
where $\FF_p$ denotes the finite field of odd prime order $p$.
%% in later sections.
A $\ZZ_{m}$-code $C$ of length $n$
(or a code $C$ of length $n$ over $\ZZ_{m}$)
is a $\ZZ_{m}$-submodule of $\ZZ_{m}^n$.
The dual code $C^\perp$ of $C$ is defined as
$C^\perp = \{ x \in \ZZ_{m}^n \mid x \cdot y = 0$ for all $y \in C\}$
under the standard inner product $x \cdot y$.
A code $C$ is {\em self-dual} if $C=C^\perp$, and 
$C$ is {\em self-orthogonal} if $C \subset C^\perp$.
A self-dual $\FF_p$-code of length $n$ exists if and only if
$n$ is even for $p \equiv 1 \pmod 4$, and
$n \equiv 0 \pmod 4$ for $p \equiv 3 \pmod 4$.
A self-dual $\ZZ_4$-code exists for every length. 

A self-orthogonal code $C$ is {\em maximal} if $C$ is
the only self-orthogonal code containing $C$.
The dimension of a maximal self-orthogonal $\FF_p$-code of
length $n$ is a constant
depending only on $n$ and $p$, and 
a self-dual code is automatically maximal.
More precisely, 
for $p \equiv 1 \pmod 4$, 
a maximal self-orthogonal $\FF_p$-code of length $n$
has dimension $(n-1)/2$ if $n$ is odd.
For $p \equiv 3 \pmod 4$,
a maximal self-orthogonal $\FF_p$-code of length $n$
has dimension $(n-1)/2$ if $n$ is odd,
$n/2-1$ if $n \equiv 2 \pmod 4$.
It is easy to see that
a maximal self-orthogonal $\ZZ_4$-code is necessarily self-dual
for every length.

%%%%%%% classification of MSO codes %%%%%%%%

Two codes $C$ and $C'$ are {\em equivalent} 
if there exists a $(1,-1,0)$-monomial matrix $P$ with 
$C' = C P = \{ x P\:|\: x \in C\}$.
%% and {\em inequivalent} otherwise.
The {\em automorphism group} $\Aut(C)$ of $C$ is the group of all
$(1,-1,0)$-monomial matrices $P$ with
$C = C P$.
Our classification method of weighing matrices of 
order $n$ and weight $k=mt$
requires a classification of maximal self-orthogonal
$\ZZ_m$-codes of length $n$  (see Section~\ref{Sec:Method}).
In this paper, some classifications of maximal self-orthogonal
$\ZZ_m$-codes are used for $m=3,4,5,7$ to classify weighing matrices.
The current knowledge on the classifications of such codes
is listed in Table~\ref{Tab:MSO}.

%%%%%%%%%%%%%%%%%%%%%%%%%%%%%%%%%%
\begin{table}[thb]
\caption{Maximal self-orthogonal $\ZZ_m$-codes of length $n$}
\label{Tab:MSO}
\begin{center}
{\small
%{\footnotesize
%{\scriptsize
\begin{tabular}{l|l|l}
\noalign{\hrule height0.8pt}
$\ZZ_m$ & \multicolumn{1}{c|}{Lengths $n$} &
\multicolumn{1}{c}{References} \\
\hline
$\FF_3$ & $1,\ldots,12$  & \cite{MPS}   \\
        & $13,\ldots,16$ & \cite{CPS}  \\
        & $17,\ldots,20$ & \cite{PSW} (see also this section) \\
        & $24$           & \cite{HM09}  \\
\hline
$\ZZ_4$ & $1,\ldots,9$   & \cite{C-S-Z4} \\
        & $10,\ldots,15$ & \cite{FGLP-Z4} \\ 
        & $16$ (Type~II) & \cite{PLF-Z4} \\
        & $16$ (Type~I), $17,18,19$     & \cite{HM09-2} \\
%        & $16$           & \cite{PLF-Z4} (Type II), \cite{HM09-2} \\
%        & $17,18,19$     & \cite{HM09-2} \\
\hline
$\FF_5$ & $1,\ldots,12$  & \cite{LPS}   \\
        & $13,\ldots,16$ & \cite{HO03}  \\
\hline
$\FF_7$ & $1,\ldots,9$   & \cite{PT}    \\
        & $10,\ldots,13$ & \cite{HO02}  \\
\noalign{\hrule height0.8pt}
    \end{tabular}
}
\end{center}
\end{table}
%%%%%%%%%%%%%%%%%%%%%%%%%%%%%%%%%%%%%%%%%%%%%%%%

%%%%%%%%%%%%%%%%%
\subsection{Ternary maximal self-orthogonal codes}
\label{Sec:MSO}
An $\FF_3$-code is called ternary.
All ternary maximal self-orthogonal codes of lengths 
$4m+1,4m+2,4m+3$ can be obtained from self-dual codes
of length $4m+4$ by subtracting (see~\cite{CPS}).
A classification of ternary maximal self-orthogonal 
codes of lengths $3, \ldots,12$, lengths $13,14,15,16$ 
and lengths $17,18,19,20$
was done in~\cite{MPS}, \cite{CPS} and \cite{PSW}, respectively.

In the course of reproducing
a classification of ternary maximal self-orthogonal 
codes of lengths up to $20$,
we discovered errors in the classification 
for lengths $18$ and $19$.
The numbers of ternary maximal self-orthogonal codes 
of lengths $18$ and $19$ are listed 
in~\cite[Table IV]{PSW} as $154$ and $54$, respectively.
However, we verified that the correct numbers are
$160$ and $56$, respectively.
Let $C_{20,i}$ denote the $i$-th self-dual code of length $20$ 
given in~\cite[Tables II and III]{PSW},
and let $n_{18}(i)$ and $n_{19}(i)$ denote the numbers of 
inequivalent maximal self-orthogonal codes 
of lengths $18$ and $19$, respectively, obtained from $C_{20,i}$
by subtracting.
Let $C_{20,i}^{(k)}$ denote the self-orthogonal
code of length $19$ obtained from $C_{20,i}$ by subtracting
the $k$-th coordinate.
%% Although the numbers $n_{19}(20)$ and $n_{19}(23)$
%% are listed as both $1$ in \cite[Table IV]{PSW},
%% we have verified that the codes $C_{20,20}^{(i)}$
%% $(i=1,\ldots,20)$ are divided into two
%% inequivalent codes $C_{20,20}^{(1)}$ and $C_{20,20}^{(20)}$, and 
%% the codes $C_{20,23}^{(i)}$
%% $(i=1,\ldots,20)$ are divided into two inequivalent codes
%% $C_{20,23}^{(1)}$ and $C_{20,23}^{(20)}$.
%% In fact, these codes have different automorphism groups,
%% that is, their orders are $32, 128, 576$ and $5184$, respectively.
%% Hence, we claim that  $n_{19}(20)=n_{19}(23)=2$.
Although the numbers $n_{19}(20)$ and $n_{19}(23)$
are listed as both $1$ in~\cite[Table IV]{PSW},
we verified that the codes $C_{20,20}^{(i)}$
$(i=1,\ldots,20)$ are equivalent to one of the two
inequivalent codes $C_{20,20}^{(1)},C_{20,20}^{(20)}$, and
the codes $C_{20,23}^{(i)}$
$(i=1,\ldots,20)$ are equivalent to one of the two inequivalent codes
$C_{20,23}^{(1)},C_{20,23}^{(20)}$.
In fact, these four codes have different automorphism groups,
of orders $32, 128, 576$ and $5184$, respectively.
Hence, we conclude that  $n_{19}(20)=n_{19}(23)=2$.
Since \cite[Table IV]{PSW} also 
contains incorrect values for $n_{18}(i)$,
we list their correct values in Table~\ref{Tab:ternary1819}.

In order to check that a classification is complete, 
in all of the classification results, 
we first verified by {\sc Magma} that all codes
are inequivalent.
This was done by the {\sc Magma} function {\tt IsIsomorphic}, 
as well as by checking that all codes
have different numbers $(B_0,B_1,\ldots,B_n)$, where 
%% $B_j$ are the numbers of distinct cosets of weight $j$.
$B_j$ is the number of distinct cosets of weight $j$.
Then we checked the mass formula, that is,
we computed the sum in 
%% \[
%% \sum_{C \in {\cal C}} \frac{2^{n} \cdot n!}{\#\Aut(C)}
%% \]
%% where ${\cal C}$ is the  set of 
\begin{equation}\label{eq:mf}
\sum_{C \in {\cal C}} \frac{2^{n} \cdot n!}{|\Aut(C)|},
\end{equation}
where ${\cal C}$ is the  set of 
inequivalent maximal self-orthogonal codes of length $n$
and we checked against the known formula for 
the number $N_0$ of distinct maximal self-orthogonal codes
of length $n$, which is given in~\cite[p.~650]{MPS}.
The automorphism group $\Aut(C)$ of $C$
is calculated by the {\sc Magma} function {\tt AutomorphismGroup}. 
Note that each summand in (\ref{eq:mf})
expresses the cardinality of the
equivalence class of a given a code $C$ and
the sum of all these cardinalities is equal to $N_0$.
The numbers $\#$ of all inequivalent 
maximal self-orthogonal codes of lengths up to $20$
are listed in Table~\ref{Tab:ternary},
and generator matrices of those codes
can be obtained electronically from~\cite{Data}.

\begin{prop}
Up to equivalence, 
there are $160$ and $56$ ternary maximal self-orthogonal codes 
of lengths $18$ and $19$, respectively.
\end{prop}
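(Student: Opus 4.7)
The plan is to exploit the fact, recalled at the beginning of this section, that every ternary maximal self-orthogonal code of length $4m+1,4m+2$ or $4m+3$ arises by subtracting one, two or three coordinates from some self-dual code of length $4m+4$. Specializing to lengths $18$ and $19$, I would take the $24$ inequivalent self-dual $\FF_3$-codes $C_{20,1},\ldots,C_{20,24}$ of length $20$ classified in~\cite{PSW} as the starting point, enumerate inequivalent codes obtained from each $C_{20,i}$ by coordinate subtraction, and then merge these lists across $i$ into a single catalogue for each length.

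For length $19$, one deletes a single coordinate: the action of $\Aut(C_{20,i})$ on the $20$ coordinates partitions them into orbits, so only one representative from each orbit need be considered when generating candidates from a single $C_{20,i}$. The same procedure applies at length $18$ with orbits of $\Aut(C_{20,i})$ on $2$-subsets of coordinates. For each surviving candidate $C$ I would compute the coset-weight distribution $(B_0,B_1,\ldots,B_n)$ and the automorphism group $\Aut(C)$ via {\sc Magma}: codes with distinct invariants are immediately inequivalent, while codes sharing invariants are separated using the {\sc Magma} function {\tt IsIsomorphic}. This should produce candidate lists of sizes $160$ at length $18$ and $56$ at length $19$.

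To certify that no representative has been missed, I would evaluate the sum~(\ref{eq:mf}) over each candidate list and compare it with the value $N_0$ given by the mass formula of~\cite[p.~650]{MPS}. Since the previous step guarantees that no equivalence class is counted twice, and each summand is the cardinality of the class of $C$, equality of the sum with $N_0$ forces the list to be complete. This is precisely the mechanism by which the totals $154$ and $54$ reported in~\cite{PSW} can be seen to be wrong: they fail the mass-formula test, while $160$ and $56$ satisfy it.

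The principal obstacle is the careful handling of the borderline cases $C_{20,20}$ and $C_{20,23}$, where~\cite{PSW} records only one inequivalent length-$19$ subtraction each but in fact two occur. Concretely, I would verify that the $20$ single-coordinate subtractions of $C_{20,20}$ fall into exactly the two equivalence classes of $C_{20,20}^{(1)}$ and $C_{20,20}^{(20)}$, and likewise that those of $C_{20,23}$ fall into the two classes of $C_{20,23}^{(1)}$ and $C_{20,23}^{(20)}$; the required inequivalence is made manifest by the automorphism-group orders $32, 128, 576, 5184$. Combining pairwise inequivalence testing with the mass-formula verification is what isolates and corrects these discrepancies and thereby establishes the stated numbers $160$ and $56$.
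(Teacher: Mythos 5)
Your proposal follows essentially the same route as the paper: generate candidates by subtracting coordinates from the $24$ self-dual codes of length $20$, establish pairwise inequivalence via coset-weight distributions and {\tt IsIsomorphic}, confirm completeness against the mass formula of~\cite{MPS}, and pinpoint the discrepancy with~\cite{PSW} at $C_{20,20}$ and $C_{20,23}$ using the automorphism-group orders $32,128,576,5184$. This matches the paper's argument in both structure and detail.
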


%%%%%%%%%%%%%%%%%%%%%%%%%%%%%%%%%%%%%%%%%%%%%%%%
\begin{table}[thbp]
\caption{Ternary maximal self-orthogonal codes of lengths $18$ and $19$}
\label{Tab:ternary1819}
\begin{center}
{\small
%{\footnotesize
%{\scriptsize
\begin{tabular}{c|c|c||c|c|c||c|c|c}
\noalign{\hrule height0.8pt}
$i$ & $n_{18}(i)$ & $n_{19}(i)$ &
$i$ & $n_{18}(i)$ & $n_{19}(i)$ &
$i$ & $n_{18}(i)$ & $n_{19}(i)$ \\
\hline 
 1 &  2 & 1 &  9 &  5 & 2 & 17 & 16 & 5\\
 2 &  5 & 2 & 10 &  8 & 2 & 18 & 12 & 4\\
 3 &  4 & 2 & 11 &  4 & 2 & 19 &  3 & 1\\
 4 &  7 & 3 & 12 & 12 & 4 & 20 & 10 & 2\\
 5 &  7 & 3 & 13 &  8 & 3 & 21 &  4 & 1\\
 6 &  6 & 3 & 14 &  9 & 3 & 22 &  5 & 1\\
 7 &  4 & 2 & 15 & 10 & 3 & 23 &  5 & 2\\
 8 &  5 & 2 & 16 &  7 & 2 & 24 &  2 & 1\\
\hline
\multicolumn{5}{c|}{}&\multicolumn{2}{c|}{Total} & 160 & 56 \\
\noalign{\hrule height0.8pt}
   \end{tabular}
}
\end{center}
\end{table}
%%%%%%%%%%%%%%%%%%%%%%%%%%%%%%%%%%%%%%%%%%%%%%%%
%%%%%%%%%%%%%%%%%%%%%%%%%%%%%%%%%%%%%%%%%%%%%%%%
\begin{table}[thbp]
\caption{Ternary maximal self-orthogonal codes}
\label{Tab:ternary}
\begin{center}
{\small
%{\footnotesize
%{\scriptsize
\begin{tabular}{c|c|l||c|c|l}
\noalign{\hrule height0.8pt}
Length &  $\#$ & \multicolumn{1}{c||}{References} &
Length &  $\#$ & \multicolumn{1}{c}{References} \\
\hline 
 3 &   1 & \cite{MPS}  &12 &   3 & \cite{MPS}  \\
 4 &   1 & \cite{MPS}  &13 &   7 & \cite{CPS} \\
 5 &   1 & \cite{MPS}  &14 &  22 & \cite{CPS} \\
 6 &   2 & \cite{MPS}  &15 &  12 & \cite{CPS} \\
 7 &   1 & \cite{MPS}  &16 &   7 & \cite{CPS} \\
 8 &   1 & \cite{MPS}  &17 &  23 & \cite{PSW} \\
 9 &   2 & \cite{MPS}  &18 & 160 & Section~\ref{Sec:MSO}\\
10 &   5 & \cite{MPS}  &19 &  56 & Section~\ref{Sec:MSO}\\
11 &   3 & \cite{MPS}  &20 &  24 & \cite{PSW} \\
\noalign{\hrule height0.8pt}
   \end{tabular}
}
\end{center}
\end{table}
%%%%%%%%%%%%%%%%%%%%%%%%%%%%%%%%%%%%%%%%%%%%%%%%

%%%%%%%%%%%%%%%%%%%%%%%%%%%%%%%%
\section{Classification method}\label{Sec:Method}

When $n$ is odd, 
the existence of a weighing matrix of order $n$ and weight $k$ implies
that $k$ is a square and $(n-k)^2+(n-k)+1 \ge n$.
When $n \equiv 2 \pmod 4$,
the existence of a weighing matrix of order $n$ and weight $k$ implies
that $k$ is the sum of two squares and
$k \le n-1$~\cite{CRS86}.

%Let $W=(m_{ij})$ be a weighing matrix of order $n$ and weight $k$.
For the remainder of this section, let
$W=(w_{ij})$ be a weighing matrix of order $n$ and weight $k$.
%The number $\sum_{s=1}^n m_{is}^2 m_{js}^2$
The number $\sum_{s=1}^n w_{is}^2 w_{js}^2$
is called the intersection number of $i$-th row $r_i$ and the
$j$-th row $r_j$  $(i\ne j)$.
The maximum number among intersection numbers for rows 
of $W$ and $W^T$ is called the intersection number of 
$W$~\cite{OM98}.
We say that $r_j$ intersects $r_i$ in $2 \ell$ places 
if the intersection number is $2 \ell$~\cite{CRS86}.
%% $\sum_{s=1}^n m_{is}^2 m_{js}^2=2 \ell$ 
For a fixed row $r_i$,
let $x_{2 \ell}$ be the numbers of rows $r_j$
other than $r_i$ such that the intersection number of 
$r_i$ and $r_j$ is $2 \ell$.
The sequence $(x_0,x_2,\ldots,x_{2\lfloor n/2 \rfloor})$ is called
the intersection pattern corresponding to $r_i$~\cite{CRS86}.
%% it is known \cite{CRS86} that 
%% \[
%% \sum_{l} x_{2 \ell}  = n-1  \text{ and }
%% \sum_{l} lx_{2 \ell} = k(k-1)/2.
%% \]
%% Let $W$ be a weighing matrix of order $n$.
The number $\sum_{j=1}^n w_{sj}^2 w_{tj}^2 w_{uj}^2$
is called the generalized intersection number and 
the following set of generalized intersection numbers
\[
N(i)= \Big| \Big\{ \{s,t,u\} \mid 
\sum_{j=1}^n w_{sj}^2 w_{tj}^2 w_{uj}^2 =i,
1 \le s,t,u \le n\ (s \not=t, s \not=u, t \not=u) \Big\} \Big|
\]
is called the $g$-distribution (see~\cite{OM98}).
Note that there are inequivalent weighing matrices
with the same  $g$-distribution.

%Let $W$ be a weighing matrix of order $n$ and weight $k$.
Let $C_m(W)$ be the $\ZZ_m$-code generated by the rows of $W$, where 
the entries of $W$ are regarded as elements of $\ZZ_m$.
The following is trivial.

\begin{lem}\label{lem:SO}
If $k$ is divisible by $m$, then $C_m(W)$ is self-orthogonal.
\end{lem}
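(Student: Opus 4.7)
The plan is to use the defining property $WW^T=kI_n$ directly at the level of rows. Writing $r_1,\ldots,r_n$ for the rows of $W$, the $(i,j)$-entry of $WW^T$ is exactly the standard inner product $r_i \cdot r_j$ computed over $\ZZ$. From $WW^T=kI_n$ we therefore read off $r_i\cdot r_j = k\delta_{ij}$ as integers.

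Next I would reduce modulo $m$. Since by hypothesis $m \mid k$, we get $r_i \cdot r_j \equiv 0 \pmod m$ for all $i,j$, including the diagonal case $i=j$. In particular the rows, viewed as vectors in $\ZZ_m^n$, are pairwise orthogonal and each is isotropic.

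Finally, any two codewords of $C_m(W)$ can be written as $\sum_i a_i r_i$ and $\sum_j b_j r_j$ with $a_i,b_j \in \ZZ_m$. Bilinearity of the standard inner product on $\ZZ_m^n$ gives
\[
\Bigl(\sum_i a_i r_i\Bigr)\cdot \Bigl(\sum_j b_j r_j\Bigr)
= \sum_{i,j} a_i b_j\,(r_i\cdot r_j) \equiv 0 \pmod m,
\]
so $C_m(W)\subseteq C_m(W)^\perp$, which is the definition of self-orthogonality.

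There is no real obstacle here; the statement is essentially a one-line consequence of $WW^T=kI_n$ together with $m\mid k$ and the bilinearity of the inner product, which is why the authors label it trivial. The only point worth being explicit about is that the conclusion requires the diagonal entries $r_i\cdot r_i = k$ to also vanish modulo $m$, so that the generators are not merely mutually orthogonal but each individually isotropic; this is exactly what the divisibility hypothesis $m\mid k$ supplies.
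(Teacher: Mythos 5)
Your proof is correct and is precisely the argument the paper has in mind when it dismisses the lemma as trivial: read off $r_i\cdot r_j=k\delta_{ij}$ from $WW^T=kI_n$, reduce modulo $m$, and extend to all of $C_m(W)$ by bilinearity. Your explicit remark that the diagonal case $r_i\cdot r_i=k\equiv 0\pmod m$ is also needed (so the generators are isotropic, not just mutually orthogonal) is a worthwhile point that the paper leaves unstated.
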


\begin{prop}\label{prop:SD}
Let $p$ be an odd prime.
If $k$ is divisible by $p$ but $k$ is not divisible by $p^2$, then
$C_p(W)$ is a self-dual $\FF_p$-code.
\end{prop}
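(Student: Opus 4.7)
The plan is to use Lemma \ref{lem:SO} to get self-orthogonality for free, and then establish self-duality by showing that $\dim_{\FF_p} C_p(W) = n/2$ via a Smith normal form computation that exploits the hypothesis $p \| k$.

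First I would note that $n$ must be even: from $WW^T = kI_n$ we get $\det(W)^2 = k^n$, and since $\det(W) \in \ZZ$, the integer $k^n$ is a square. Because $p \mid k$ but $p^2 \nmid k$, the integer $k$ is not a square, so $n$ must be even. Consequently $|\det(W)| = k^{n/2}$ and
\[
v_p(\det W) = \tfrac{n}{2}\, v_p(k) = \tfrac{n}{2}.
\]

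Next I would pass to the Smith normal form of $W$ over the localization $\ZZ_{(p)}$, writing the elementary divisors as $p^{e_1}, \ldots, p^{e_n}$ with $0 \le e_1 \le \cdots \le e_n$. By construction, $\sum_i e_i = v_p(\det W) = n/2$, and the rank $r := \rank_{\FF_p}(W) = \dim_{\FF_p} C_p(W)$ equals the number of indices $i$ with $e_i = 0$. The remaining $n - r$ elementary divisors each contribute at least $1$ to the sum, so
\[
n - r \le \sum_{i\,:\,e_i \ge 1} e_i = \tfrac{n}{2},
\]
which gives $r \ge n/2$. Combined with the self-orthogonality inequality $r \le n/2$ coming from Lemma \ref{lem:SO}, this forces $r = n/2$, so $C_p(W)$ is self-dual.

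There is no serious obstacle here; the only subtle point is to present the elementary divisor inequality cleanly, and to record the by-product that all nonzero elementary divisors must equal $p$ exactly. The hypothesis $p^2 \nmid k$ is used precisely once, in pinning down $v_p(\det W) = n/2$; without it the inequality $n - r \le v_p(\det W)$ would be too weak to conclude.
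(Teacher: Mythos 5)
Your proof is correct and follows essentially the same route as the paper's: both compute $|\det W| = k^{n/2}$, use the elementary divisors of $W$ to show that at most $n/2$ of them are divisible by $p$ (hence $\dim C_p(W) \ge n/2$), and combine this with the upper bound $\dim C_p(W) \le n/2$ from Lemma~\ref{lem:SO}. The only cosmetic difference is that you localize at $p$ and phrase the count via $p$-adic valuations, whereas the paper works directly with the global elementary divisors $d_1 \mid d_2 \mid \cdots \mid d_n$.
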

\begin{proof}
Suppose that $k=pt$, where $t$ is not divisible by $p$.
Since
\[
\det(W^2)=\det(W W^T)= \det(kI_n)= k^n,
\]
we have $|\det(W)|=k^{n/2}$.
%% Let $d_1,d_2,\ldots,d_n$ be elementary divisors of $W$.
Let $d_1|d_2|\cdots|d_n$ be the elementary divisors of $W$
(see e.g.~\cite[II.17]{Newman} for the definition of elementary divisors).
Then
\[
|\det(W)|= d_1d_2\cdots d_n =k^{\frac{n}{2}}=p^{\frac{n}{2}}t^{\frac{n}{2}}.
\]
%% From the assumption that $m$ is not divisible by $p$,
%% Since $m$ is not divisible by $p$,
%% $n$ must be even.
%% Moreover, there are at most $n/2$ $p$'s among $d_1,d_2,\ldots,d_n$.
Since $t$ is not divisible by $p$,
$n$ must be even, and at most $n/2$ $d_i$'s are divisible by $p$.
%Hence, $\rank_p(W) \ge n/2$.
%By Lemma~\ref{lem:SO},  $\rank_p(W) \le n/2$.
Hence, $\dim C_p(W) \ge n/2$.
By Lemma~\ref{lem:SO},  $\dim C_p(W) \le n/2$.
The result follows.
\end{proof}

%% \begin{cor}
%% Suppose that $n \equiv 2 \pmod 4$ and 
%% $3$ divides $k$ to exactly the first power, that is,
%% $3$ divides $k$ but $9$ does not divide $k$,
%% Then there is no weighing matrix of order $n$
%% and weight $k$.
%% \end{cor}
%% \begin{proof}
%% Follows from the fact that a ternary self-dual code
%% of length $n$ exists if and only if $n \equiv 0 \pmod 4$.
%% \end{proof}

%% \begin{rem}
%% It is known \cite{CRS86} that
%% if a weighing matrix of order $n \equiv 2 \pmod 4$ and 
%% weight $k$ exists
%% then $k$ must be the sum of two squares.
%% It seems that this condition is stronger than the above.
%% \end{rem}

%%%%% 
From now on,
suppose that $\ZZ_m$ is either $\FF_p$ or $\ZZ_4$.
%%%%%
Let $n_i(x)$ denote the number of components $i$ of $x \in \ZZ_m^n$
($i \in \ZZ_m$).
%Let $W$ be a weighing matrix of order $n$ and weight $k=mt$.
Any row of $W$ is a codeword $x$ of $C_m(W)$
such that $n_0(x)=n-k$ and $n_1(x)+n_{-1}(x)=k$.
By Lemma~\ref{lem:SO}, $C_m(W)$ is self-orthogonal.
It follows that the rows of $W$
%Moreover, any weighing matrix of order $n$ and weight $k=mt$
%can be viewed as an $n \times n$ matrix whose rows 
are composed of
$n$ codewords $x$ with
$n_0(x)=n-k$ and $n_1(x)+n_{-1}(x)=k$ in some
maximal self-orthogonal $\ZZ_m$-code of length $n$.

We now describe how all weighing matrices 
of order $n$ and weight $k=mt$ can be constructed from
maximal self-orthogonal $\ZZ_m$-codes of 
length $n$.
Let $C$ be a maximal self-orthogonal $\ZZ_m$-code of 
length $n$, and let $V$ be the set of pairs
$\{x,-x\}$ satisfying the condition that
$n_0(x)=n-k$, $n_1(x)+n_{-1}(x)=k$, $x \in C$.
We define the simple undirected graph $\Gamma$, whose set of  
vertices is the set $V$ and two vertices $\{x,-x\},\{y,-y\}\in V$  
% are adjacent if $\overline{x} \cdot \overline{y}=0$, where 
are adjacent if $\overline{x}\:{\overline{y}}^T=0$, where 
%% where we regard $x$ as a vector $\overline{x}$ of $\ZZ^n$.
%$\overline{x}=(\overline{x_1},\ldots,\overline{x_n}) \in \ZZ^n$
%is defined as $\overline{x_i}=0,1$ if $x_i=0,1$, respectively and 
%$\overline{x_i}=-1$ if $x_i=m-1$.
$\overline{x}=(\overline{x_1},\ldots,\overline{x_n}) \in \{0,1,-1\}^n
\subset \ZZ^n$
is the vector with $\overline{x}\bmod{m}=x$.

It follows that the $n$-cliques in the graph $\Gamma$
are precisely the set of weighing matrices which generate 
subcodes of $C$.
It is clear that the group $\Aut(C)$ acts on the graph 
$\Gamma$ as an automorphism group, and
therefore, the classification of such weighing matrices
reduces to finding a set of representatives of
$n$-cliques of $\Gamma$ up to the action of $\Aut(C)$. 
This computation was performed in {\sc Magma}~\cite{Magma}, 
the results were then converted 
to weighing matrices.
%In this way, by considering all inequivalent
%maximal self-orthogonal $\ZZ_m$-code of length $n$,
In this way, by considering all inequivalent
maximal self-orthogonal $\ZZ_m$-codes of length $n$,
we obtain a set of weighing matrices which contain a representative
of every equivalence class of weighing matrices of order
$n$ and weight $k=mt$. 
%% Note that, if two weighing matrices
%% $W_1$ and $W_2$ generate the same code $C$, then $W_1$ is
%% equivalent to $W_2$ if and only if the corresponding
%% cliques belong to the same orbit under $\Aut(C)$. However,

%% newline
Since a weighing matrix does not, in general generate a
maximal self-orthogonal code, two equivalent weighing matrices
may be contained in two inequivalent maximal self-orthogonal codes.
One could consider not only maximal but also all self-orthogonal 
codes, and then list only those weighing matrices which generate
the given code. This will avoid duplication of equivalent
weighing matrices in the classification. However, we took
a different approach for efficiency. Once we have a set of
weighing matrices which could possibly contain equivalent pairs
of weighing matrices, we perform equivalent testing by considering
the associated incidence structures. This construction 
of incidence structures is
given by~\cite[Theorem 6.8]{J}, and in our case, it is as follows.
Given a weighing matrix $W$ of order $n$, replacing 
$0,1,-1$ in each entry by the matrices
\[
\begin{pmatrix} 0&0\\0&0\end{pmatrix},
\begin{pmatrix} 1&0\\0&1\end{pmatrix},
\begin{pmatrix} 0&1\\1&0\end{pmatrix},
\]
respectively, we obtain a $(0,1)$-matrix of order $2n$.
This matrix defines a square incidence structure $\cD(W)$
with $2n$ points and $2n$ blocks. We may take the set of
points of $\cD(W)$ to be $\cP=\{\pm1,\pm2,\dots,\pm n\}$,
so that the permutation $\tau=(1,-1)(2,-2)\cdots(n,-n)$
is a fixed-point-free involutive automorphism of $\cD(W)$.
More precisely, the set of blocks $\cB(W)$ of $\cD(W)$ is
\[
\cB(W)=\{B_i^\varepsilon\mid 1\leq i\leq n,\;\varepsilon=\pm1\},
\]
where
\[
B_i^\varepsilon=\{\varepsilon w_{ij}j\mid 1\leq j\leq n,\;w_{ij}\neq0\}.
\]
Here an automorphism of  $\cD(W)$ is a permutation of $\cP$ 
which maps $\cB(W)$ to $\cB(W)$.
The set of all automorphisms
%is called the automorphism group $\Aut(\cD(W))$.
is called the automorphism group and is denoted by $\Aut(\cD(W))$.
%If we write $P_i=\{i,-i\}$ $(1\le i\le n)$, then
If we denote the orbits on $\cP$ under $\tau$ by $P_1,\dots,P_n$,
then the following conditions hold.
\begin{itemize}
\item[\rm (i)]
$|B \cap P_i| \le 1$ for any $i$ $(1 \le i \le n)$ and any
%$|B \cap i^{\langle\tau\rangle}| \le 1$ for any $i$ and any
block $B \in \cB(W)$,
\item[\rm (ii)]
for any two blocks $B,B'\in\cB(W)$ such that $B' \ne B, B^\tau$,
\[
%|\{i\mid B\cap i^{\langle\tau\rangle}
%=B'\cap i^{\langle\tau\rangle}\neq\emptyset\}|\\
%=|\{i\mid \emptyset\neq B\cap i^{\langle\tau\rangle}
%\neq B'\cap i^{\langle\tau\rangle}\neq\emptyset\}|.
|\{i\mid B\cap P_i=B'\cap P_i\neq\emptyset\}|
=|\{i\mid \emptyset\neq B\cap P_i\neq B'\cap P_i\neq\emptyset\}|.
\]
\end{itemize}
Let $W_1$ and $W_2$ be weighing matrices of the
same order and weight.
We say that $\cD(W_1)$ and $\cD(W_2)$ are equivalent if
there is a permutation  $\sigma$ of
%$\cP=\{\pm1,\pm2,\dots,\pm n\}$ which maps $\cB(W_1)$ to
$\cP$ which maps $\cB(W_1)$ to
$\cB(W_2)$. 
%Obviously, equivalent weighing
%matrices give equivalent incidence structures.
Obviously, the equivalence of $W_1$ and $W_2$
implies that of $\cD(W_1)$ and $\cD(W_2)$.
Conversely, 
% if $W_1$ and $W_2$ are weighing matrices of the same order and weight, 
the following lemma gives a criterion
under which the equivalence of $\cD(W_1)$ and $\cD(W_2)$
implies that of $W_1$ and $W_2$.

%% \begin{lem}\label{lem:equiv}
%% Let $W$ be a weighing matrix of order $n$, and let
%% $\cD(W)=(\cP,\cB)$ be the square incidence structure
%% defined by $W$. Suppose that there exists a unique
%% partition $\cP=\bigcup_{i=1}^n P_i$ satisfying the
%% following condition: for any two blocks
%% $B,B'\in\cB$,
%% \[
%% \{i\mid 1\leq i\leq n,\;B\cap P_i\neq\emptyset\}
%% \neq
%% \{i\mid 1\leq i\leq n,\;B'\cap P_i\neq\emptyset\},
%% \]
%% implies
%% \begin{align*}
%% &|\{i\mid 1\leq i\leq n,\;B\cap P_i=B'\cap P_i\neq\emptyset\}|\\
%% &=|\{i\mid 1\leq i\leq n,\;\emptyset\neq B\cap P_i\neq B'\cap P_i
%% \neq\emptyset\}|.
%% \end{align*}
%% Then $W$ is the unique weighing matrix satisfying
%% $\cD(W)\cong(\cP,\cB)$, up to equivalence.
%% \end{lem}

\begin{lem}\label{lem:equiv}
Let $W$ be a weighing matrix of order $n$, and let
$\cD(W)$ be the square incidence structure defined by $W$. 
Suppose that 
\[
\tau=\tau_0=(1,-1)(2,-2) \cdots (n,-n)
%\tau_0=(1,-1)(2,-2) \cdots (n,-n)
\]
is 
the unique fixed-point-free involutive automorphism
% $\tau$ 
of $\cD(W)$ satisfying the conditions (i) and (ii) above,
up to conjugacy in $\Aut(\cD(W))$.
%the following conditions: 
%\begin{itemize}
%\item[\rm (1)]
%$\tau$ acts fixed-point-freely on $\cB$,
%\item[\rm (2)]
%$|B \cap P_i| \le 1$ for any $i$ and any
%block $B \in \cB$
%where $P_1,P_2,\ldots,P_n$ are the orbits of $\cP$ under $\tau$,
%\item[\rm (3)]
%for any two blocks $B,B'\in\cB$ such that $B' \ne B, B^\tau$,
%\begin{align*}
%&|\{i\mid 1\leq i\leq n,\;B\cap P_i=B'\cap P_i\neq\emptyset\}|\\
%&=|\{i\mid 1\leq i\leq n,\;\emptyset\neq B\cap P_i\neq B'\cap P_i
%\neq\emptyset\}|.
%\end{align*}
%\end{itemize}
%Then $W$ is the unique weighing matrix satisfying
%$\cD(W)\cong(\cP,\cB)$, up to equivalence.
%If $U$ is a weighing matrix satisfying $\cD(U)\cong\cD(W)$,
If $U$ is a weighing matrix such that
$\cD(U)$ is equivalent to $\cD(W)$,
then $U$ is equivalent to $W$.
\end{lem}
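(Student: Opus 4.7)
The plan is to adjust the given isomorphism $\sigma$ of incidence structures so that it commutes with $\tau_0$, and then read off from this adjusted map a pair of $(1,-1,0)$-monomial matrices realizing the equivalence $W\sim U$.

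First I would show that the uniqueness hypothesis holds equally well for $\cD(U)$. Applying the construction of $\cD(\cdot)$ to the weighing matrix $U$ in the same way as for $W$ produces $\tau_0$ as a fixed-point-free involutive automorphism of $\cD(U)$ satisfying (i) and (ii). Because conditions (i) and (ii) refer only to the incidence structure and the orbit partition of the involution, they are preserved under any isomorphism of incidence structures. Therefore, via $\sigma$, the uniqueness-up-to-conjugacy hypothesis for $\cD(W)$ transports to $\cD(U)$. Applied to the involution $\sigma^{-1}\tau_0\sigma\in\Aut(\cD(U))$, this yields some $\rho\in\Aut(\cD(U))$ with $\rho^{-1}\tau_0\rho=\sigma^{-1}\tau_0\sigma$. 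Replacing $\sigma$ by $\sigma\rho$, which still sends $\cB(U)$ to $\cB(W)$ since $\rho$ preserves $\cB(U)$, I may assume that $\sigma$ itself commutes with $\tau_0$.

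Next I would identify such a $\sigma$ with a signed permutation. The centralizer of $\tau_0$ in $\mathrm{Sym}(\cP)$ is the wreath product $S_2\wr S_n$, i.e., the group of signed permutations of $\{1,\dots,n\}$, so $\sigma(\varepsilon j)=\varepsilon\alpha_j\pi(j)$ for some $\pi\in S_n$ and $\alpha_j\in\{\pm1\}$. Since $\sigma$ commutes with $\tau_0$, it permutes the $\tau_0$-pairs of blocks, so there exist $\mu\in S_n$ and signs $\beta_i\in\{\pm1\}$ with $\sigma(B_i^{U,+})=B_{\mu(i)}^{W,\beta_i}$. Substituting $B_i^{U,+}=\{u_{ij}j\mid u_{ij}\ne 0\}$ into this equation and comparing with the defining formula for $B_{\mu(i)}^{W,\beta_i}$ forces the identities $w_{\mu(i),\pi(j)}=\beta_i\alpha_j u_{ij}$ for all $i,j$. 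In matrix terms this reads $W=PUQ$, where $P$ and $Q$ are the monomial matrices determined by $(\mu,\beta)$ and $(\pi,\alpha)$, respectively, which is exactly the asserted equivalence.

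I expect the main obstacle to be the first step: setting up cleanly the transport of the uniqueness hypothesis along $\sigma$, in particular keeping track of the fact that conditions (i) and (ii) are phrased in terms of the orbit partition of the involution in question, and then verifying that applying uniqueness to $\sigma^{-1}\tau_0\sigma$ inside $\Aut(\cD(U))$ is legitimate. Once $\sigma$ has been replaced by an element of the centralizer of $\tau_0$, the remaining work is the routine (but sign-sensitive) bookkeeping of step three, since the centralizer of $\tau_0$ is canonically the group of signed permutations and the blocks of $\cD(U)$ and $\cD(W)$ are expressed directly in terms of the entries of $U$ and $W$.
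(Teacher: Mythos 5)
Your proposal is correct and follows essentially the same route as the paper's proof: use the uniqueness-up-to-conjugacy hypothesis to modify $\sigma$ so that it commutes with $\tau_0$, then identify the centralizer of $\tau_0$ with the group of signed permutations and read off the monomial matrices $P,Q$ from the induced action on blocks. The only (cosmetic) difference is that the paper verifies conditions (i) and (ii) for the conjugate involution $\sigma^{-1}\tau_0\sigma$ viewed inside $\Aut(\cD(W))$ and applies the hypothesis there directly, whereas you transport the uniqueness statement to $\cD(U)$ first; both are valid.
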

\begin{proof}
%Let $\sigma$ denote an equivalent map from $\cD(U)$ to
%$\cD(W)$. 
Let $\sigma$ denote a map from $\cD(U)$ to $\cD(W)$ giving an  
equivalence.
This means that $\sigma$ is a permutation of
% $\cP=\{\pm1,\pm2,\dots,\pm n\}$ which maps $\cB(U)$ to
$\cP$ which maps $\cB(U)$ to
$\cB(W)$. 

We first claim that $\tau=\sigma^{-1}\tau_0\sigma$
satisfies the conditions (i) and (ii) above. Indeed, 
the orbits on $\cP$ under $\tau$ are
$P_i=\{i,-i\}^\sigma$ $(1\leq i\leq n)$.
If $B\in\cB(W)$, then $B^{\sigma^{-1}}\in\cB(U)$, hence
$|B\cap P_i|=|B^{\sigma^{-1}}\cap\{i,-i\}|\leq1$.
Thus, (i) holds.
If $B,B'\in\cB(W)$ and $B'\neq B,B^\tau$, then
${B'}^{\sigma^{-1}}\neq B^{\sigma^{-1}}$ and
${B'}^{\sigma^{-1}}\neq B^{\tau\sigma^{-1}}
=B^{\sigma^{-1}\tau_0}$. Since (ii) holds for
$\cB(U)$ and $\tau_0$, we have
\begin{align*}
&|\{i\mid B^{\sigma^{-1}}\cap\{i,-i\}=
{B'}^{\sigma^{-1}}\cap \{i,-i\}\neq\emptyset\}|
\\ &=
|\{i\mid \emptyset\neq B^{\sigma^{-1}}\cap\{i,-i\}\neq
{B'}^{\sigma^{-1}}\cap \{i,-i\}\neq\emptyset\}|.
\end{align*}
Thus, (ii) holds. 
Therefore, the claim is proved. 

By assumption, then, $\tau$ is conjugate to $\tau_0$
in $\Aut(\cD(W))$. This implies that there exists an
automorphism $\pi\in\Aut(\cD(W))$ such that
$\sigma^{-1}\tau_0\sigma=\pi^{-1}\tau_0\pi$. Replacing
$\sigma$ by $\sigma\pi^{-1}$, we may assume from the
beginning that $\sigma$ commutes with $\tau_0$. Then
there exists a permutation $\rho\in S_n$ and $q_j\in\{\pm1\}$
such that
$(\pm j)^\sigma=\pm q_j j^\rho$.
Let
\begin{align*}
\cB(W)&=\{B_i^\varepsilon\mid 1\leq i\leq n,\;\varepsilon=\pm1\},\\
\cB(U)&=\{C_i^\varepsilon\mid 1\leq i\leq n,\;\varepsilon=\pm1\},
\end{align*}
where
\begin{align*}
B_i^\varepsilon&=\{\varepsilon w_{ij}j\mid 1\leq j\leq n,\;w_{ij}\neq0\},\\
C_i^\varepsilon&=\{\varepsilon u_{ij}j\mid 1\leq j\leq n,\;u_{ij}\neq0\}.
\end{align*}
Since $\cB(U)^\sigma=\cB(W)$,
for any $i$, there exists $i'$ and and $p_i\in\{\pm1\}$ such
that $(C_i^+)^\sigma=B_{i'}^{p_i}$. 
Since $\sigma$ commutes with $\tau_0$, we have
$(C_i^-)^\sigma
%=(C_i^+)^{\tau\sigma}
%=(C_i^+)^{\sigma\tau}=(B_{i'}^\varepsilon)^\tau
=B_{i'}^{-p_i}$.
This implies that 
%\[
%\{\{C_i^+,C_i^-\}\mid 1\leq i\leq n\}^\sigma
%=\{\{B_i^+,B_i^-\}\mid 1\leq i\leq n\}.
%\]
%Consequently, 
there exists a permutation $\pi\in S_n$
such that
$(C_i^+)^\sigma=B_{i^\pi}^{p_i}$.
%In particular, setting $\varepsilon=1$, we find
%\[
%\{q_j U_{ij}j^\rho\mid 1\leq j\leq n,\; U_{ij}\neq0\}
%=
%\begin{align*}
%\{p_i W_{i^\pi,j^\rho} j^\rho\mid 1\leq j\leq n,\;
%W_{i^\pi,j^\rho}\neq0\}
%\\ &=
%\{p_i W_{i^\pi,j} j\mid 1\leq j\leq n,\;
%W_{i^\pi,j}\neq0\}
%&=
%B_{i^\pi}^{p_i}
%\\ &=
%(C_i^+)^\sigma
%\\ &=
%\{(U_{ij}j)^\sigma\mid 1\leq j\leq n,\; U_{ij}\neq0\}
%\\ &=
%=
%\{q_j U_{ij}j^\rho\mid 1\leq j\leq n,\; U_{ij}\neq0\}.
%\end{align*}
%\]
Thus, $q_j u_{ij}=p_i w_{i^\pi,j^\rho}$.

Now, define monomial matrices
$P=(p_i\delta_{i^\pi,j})$,
$Q=(q_i\delta_{i^\rho,j})$.
%so that $Q^{-1}=(q_j\delta_{i^{\rho^{-1}},j})$.
Then we obtain $PWQ^{-1}=U$.
%\begin{align*}
%(PWQ^{-1})_{ij}&=
%\sum_{h,h'}p_i\delta_{i^\pi,h}W_{h,h'}(Q^{-1})_{h',j}
%\\ &=
%p_iq_jW_{i^\pi,j^\rho}
%\\ &=
%U_{ij}.
%\end{align*}
Therefore, $W$ is equivalent to $U$.
\end{proof}

%%%%%%%%%%%%%%%%%
\section{Weighing matrices of weight 5}\label{Sec:wt5}

In the course of reproducing
a classification of weighing matrices of 
order $12$ (see Section~\ref{Sec:12}),
we discovered errors in the classification 
of weighing matrices of weight $5$ given in~\cite[Theorem 5]{CRS86}.
In this section, we give a revised classification of 
weighing matrices of weight $5$.

% Let $W$ be a weighing matrix of order $2n$ and weight $5$.
%% Then $n$ must be even \cite{CRS86}.
In the proof of~\cite[Theorem 5]{CRS86}, the authors of~\cite{CRS86}
divide the classification into the following three cases:
\begin{itemize}
\item[(a)]
at least two other rows intersect the first row in four places or,
\item[(b)]
no rows intersect any other row in four places or,
\item[(c)]
exactly one row intersects the first row in four places.
\end{itemize}
Then all weighing matrices of weight $5$ for the three
cases (a), (b) and (c) were classified in~\cite[Theorem 5]{CRS86}.
%% Here we consider the case (b).
In the proof of~\cite[Theorem 5]{CRS86},
$D(16,5)$ is claimed to be the unique weighing matrix of weight $5$  
satisfying (b).
However, we found more weighing matrices of weight $5$ satisfying
(b).
%%%%%%%%%%%%%%%%%%%
In Figure~\ref{Fig:1214-5},
we give such weighing matrices $W_{12,5}$ and $W_{14,5}$
of orders $12$ and $14$, respectively.

\begin{figure}[t]
\begin{center}
%{\small
{\footnotesize
%{\scriptsize
\begin{align*}
W_{12,5}&=
\left(\begin{array}{rrrrrrrrrrrr}
  1&  1&  1&  1&  1&  0&  0&  0&  0&  0&  0&  0\\
  1& -1&  0&  0&  0&  1&  1&  1&  0&  0&  0&  0\\
  1&  0& -1&  0&  0& -1&  0&  0&  1&  1&  0&  0\\
  1&  0&  0& -1&  0&  0& -1&  0& -1&  0&  1&  0\\
  1&  0&  0&  0& -1&  0&  0& -1&  0& -1& -1&  0\\
  0&  1& -1&  0&  0&  0&  0&  1&  0& -1&  0&  1\\
  0&  1&  0& -1&  0&  1&  0&  0&  0&  1& -1&  0\\
  0&  1&  0&  0& -1&  0&  1&  0&  0&  0&  1& -1\\
  0&  0&  1& -1&  0& -1&  1&  0&  0&  0&  0&  1\\
  0&  0&  1&  0& -1&  0& -1&  1&  1&  0&  0&  0\\
  0&  0&  0&  1& -1&  0&  0&  0& -1&  1&  0&  1\\
  0&  0&  0&  0&  0&  1&  0& -1&  1&  0&  1&  1
\end{array}\right)
\\
W_{14,5}&=
\left(\begin{array}{rrrrrrrrrrrrrr}
  1&  1&  1&  1&  1&  0&  0&  0&  0&  0&  0&  0&  0&  0\\
  1& -1&  0&  0&  0&  1&  1&  1&  0&  0&  0&  0&  0&  0\\
  1&  0& -1&  0&  0& -1&  0&  0&  1&  1&  0&  0&  0&  0\\
  1&  0&  0& -1&  0&  0& -1&  0& -1&  0&  1&  0&  0&  0\\
  1&  0&  0&  0& -1&  0&  0& -1&  0& -1& -1&  0&  0&  0\\
  0&  1& -1&  0&  0&  1&  0&  0&  0&  0&  0&  1&  1&  0\\
  0&  1&  0& -1&  0&  0&  0&  1&  0&  0& -1&  0& -1&  0\\
  0&  1&  0&  0& -1&  0&  1&  0&  0&  0&  1& -1&  0&  0\\
  0&  0&  1& -1&  0&  0&  0&  0&  1&  0&  0&  0&  1& -1\\
  0&  0&  1&  0& -1&  0&  0&  0&  0&  1&  0&  1&  0&  1\\
  0&  0&  0&  1& -1&  0& -1&  1&  0&  0&  0&  0&  0& -1\\
  0&  0&  0&  0&  0&  1& -1&  0&  1&  0&  0& -1&  0&  1\\
  0&  0&  0&  0&  0&  1&  0& -1&  0&  1&  0&  0& -1& -1\\
  0&  0&  0&  0&  0&  0&  0&  0&  1& -1&  1&  1& -1&  0
\end{array}\right)
\end{align*}
\caption{Weighing matrices of orders $12,14$ and weight $5$}
\label{Fig:1214-5}
}
\end{center}
\end{figure}

\begin{lem}\label{lem:wt5}
Let $W$ be a weighing matrix of order $2n$ and weight $5$ 
satisfying the condition (b). 
Then $W$ contains $W_{12,5}$, $W_{14,5}$ or
$D(16,5)$ as a direct summand.
%% For order $n=12,14$ and $16$, 
%% $W$ is equivalent to $W_{12,5}$, $W_{14,5}$ and
%% $D(16,5)$ in \cite{CRS86}, respectively.
%% \item[\rm (iii)] 
%% For order $n \ge18$, 
%% $W$ is equivalent to
%% $\bigoplus_{i_1} W_{12,5} \bigoplus_{i_2} W_{14,5} \bigoplus_{i_3} D(16,5)$
%% where $12i_1+14i_2+16i_3=n$.
\end{lem}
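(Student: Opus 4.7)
The plan is to exploit the strong combinatorial restriction that condition~(b) places on the ``shell'' of rows around a fixed row, and then identify the resulting configuration with one of the three tabulated matrices. After a monomial normalization, I would assume that the first row of $W$ is $r_1=(1,1,1,1,1,0,\ldots,0)$. Row-orthogonality forces every pairwise intersection number to be even, and (b) excludes intersection number $4$, so any other row meeting $r_1$ does so in exactly two of the columns $\{1,\ldots,5\}$. Since $W^{T}W=5I_{2n}$, each of these five columns has weight $5$, contributing $4$ nonzero entries outside row~$1$; summing over the five columns and halving shows that exactly $10$ other rows meet $r_1$. Moreover, expanding the $(i,j)$-entry of $W^{T}W$ for $1\le i<j\le 5$ and isolating the $+1$ contribution of $r_1$ shows that the number $n_{ij}$ of further rows meeting $r_1$ in the pair $\{i,j\}$ is positive and odd; combined with $\sum_{i<j}n_{ij}=10$, this forces $n_{ij}=1$ for every pair, and I denote the unique such row by $r_{ij}$.

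Writing $T_{ij}\subset\{6,\ldots,2n\}$ for the $3$-element ``outer'' support of $r_{ij}$, condition~(b) applied to the pair $r_{ij},r_{kl}$ translates into $|T_{ij}\cap T_{kl}|=1$ whenever $|\{i,j\}\cap\{k,l\}|=1$, and $|T_{ij}\cap T_{kl}|\in\{0,2\}$ whenever $\{i,j\}$ and $\{k,l\}$ are disjoint. Each of the five ``pencils'' $\{T_{pq}:q\neq p\}$ thus consists of four $3$-sets that pairwise meet in exactly one element; an elementary argument (sunflower versus non-sunflower) describes every such pencil, and the cross-pencil constraints glue the five pencils together very rigidly. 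A short case analysis then shows that $|\bigcup_{i<j}T_{ij}|\in\{7,9,11\}$, and in each case the resulting linear $3$-uniform configuration on that many points is unique up to isomorphism; fixing the signs of the $r_{ij}$ using row-orthogonality on the two-column overlaps then determines the $11$ core rows up to monomial equivalence.

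Finally, I would extract the block. In each case the $5+|\bigcup T_{ij}|\in\{12,14,16\}$ involved columns attain their full column weight of $5$ within the $11$ core rows together with $1$, $3$, or $5$ further rows whose supports lie entirely in $\bigcup T_{ij}$; the existence, supports, and signs of these extra rows are forced by column orthogonality on the outer columns. Consequently every remaining row of $W$ is zero on these columns, and the $12\times12$, $14\times14$, or $16\times16$ submatrix indexed by the core rows and the involved columns splits off as a direct summand of $W$. A direct comparison with the explicit matrices $W_{12,5}$, $W_{14,5}$, and $D(16,5)$ identifies the summand in each case.

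The main obstacle is the combinatorial case analysis of the ten $3$-sets $T_{ij}$. The pairwise-intersection pattern alone admits several abstract configurations, and one must also invoke the sign relations imposed by row-orthogonality (not merely the support pattern) to rule out configurations that do not extend to a genuine weighing matrix. The fact that exactly three configurations survive — and that each one decouples from the rest of $W$ as a direct summand — is precisely what the lemma asserts.
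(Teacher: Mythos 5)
Your proposal follows essentially the same route as the paper: normalize the first row, show that exactly one further row meets it in each of the ten pairs of its support columns, and then case-analyse the overlap pattern of the outer supports of these ten rows (using the sign constraints from orthogonality as well as the support constraints from condition (b)) until one of the three blocks is forced to split off. The paper executes the case analysis you leave sketched by normalizing the pencil through column $1$ to a fixed block $M_1$ and reducing the remaining freedom to the conjugacy class in $S_3$ of a single $3\times 3$ permutation matrix $A$, whose three classes yield $D(16,5)$, $W_{14,5}$ and $W_{12,5}$, respectively.
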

%%%%%%%%%%%%%%%%%%%%%%%
\begin{proof}
%% Since $5$ is not a square, $n$ must be even
%% (see \cite[Example 4]{CRS86}).
%% Let $(x_0,x_2,\ldots,x_n)$ be the intersection pattern corresponding to
%% a fixed row $r$ of $W$.
%% Then 
%% \[
%% \sum_{j} x_{2j}  = n-1  \text{ and }
%% \sum_{j} jx_{2j} = 10.
%% \]
%% From the condition (b), $x_4=0$.
%% Thus 
%% \[
%% x_{0}=n-11 \text{ and } x_2=10.
%% \]
%% Hence $n \ge 11$.
%% Since $n$ is even, $n \ge 12$. 
%% This completes a proof of (i).
%% 
%%From the condition that $x_2=10$ (corresponding to the first row),
%% It suffices to show that $W$ contains $W_{12,5}$, $W_{14,5}$ or
%% $D(16,5)$ as a direct summand.
From the condition (b),
we may assume without loss of generality that 
the first $5$ rows of $W$ have the following form:
\[
M_1=
\left(\begin{array}{cc|ccc|ccc|ccc|c}
+&+&+&+&+&0&0&0&0&0&0&0\cdots0\\
+&-&0&0&0&+&+&+&0&0&0&0\cdots0\\
+&0&-&0&0&-&0&0&+&+&0&0\cdots0\\
+&0&0&-&0&0&-&0&-&0&+&0\cdots0\\
+&0&0&0&-&0&0&-&0&-&-&0\cdots0
%%  1& 1& 1& 1& 1& 0& 0& 0& 0& 0& 0& 0  \cdots  0 \\
%%  1&-1& 0& 0& 0& 1& 1& 1& 0& 0& 0& 0  \cdots  0 \\
%%  1& 0&-1& 0& 0&-1& 0& 0& 1& 1& 0& 0  \cdots  0 \\
%%  1& 0& 0&-1& 0& 0&-1& 0&-1& 0& 1& 0  \cdots  0 \\
%%  1& 0& 0& 0&-1& 0& 0&-1& 0&-1&-1& 0  \cdots  0 
\end{array}\right),
\]
where $+,-$ denote $1,-1$, respectively.
%% For $n=12,14$ and $16$, we found all vectors which are orthogonal to
%% each row of $S_5$ and intersects each row of $S_5$ in $2 \ell$ places $(l=0,1)$.
%% From the vectors, we found $12,240$ and $3840$ distinct
%% weighing matrices satisfying (b) for $n=12,14$ and $16$, respectively.
%% Then we verified that all matrices are equivalent for each order.
%% For $n=12$ and $16$, $W_{12,6}$ given in Section \ref{Sec:12}
%% and $D(16,5)$ in \cite{CRS86} are the unique matrices.
%% For $n=14$, the matrix 
%% \[
%% W_{14,5}=
%% \left(\begin{array}{c} 
%% S_5 \\
%% S_{14}
%% \end{array}\right)
%% \]
%% is the unique matrix where
%% \[
%% S_{14}=
%% \left(\begin{array}{rrrrrrrrrrrrrr}
%%  0&  1& -1&  0&  0&  1&  0&  0&  0&  0&  0& -1&  0&  1\\
%%  0&  1&  0& -1&  0&  0&  1&  0&  0&  0&  0&  0&  1& -1\\
%%  0&  1&  0&  0& -1&  0&  0&  1&  0&  0&  0&  1& -1&  0\\
%%  0&  0&  1& -1&  0&  0&  0&  0&  1&  0&  0& -1& -1&  0\\
%%  0&  0&  1&  0& -1&  0&  0&  0&  0&  1&  0&  0&  1&  1\\
%%  0&  0&  0&  1& -1&  0&  0&  0&  0&  0&  1& -1&  0& -1\\
%%  0&  0&  0&  0&  0&  1& -1&  0&  0&  1& -1&  0&  0& -1\\
%%  0&  0&  0&  0&  0&  1&  0& -1&  1&  0&  1&  1&  0&  0\\
%%  0&  0&  0&  0&  0&  0&  1& -1& -1&  1&  0&  0& -1&  0
%% \end{array}\right).
%% \]
%% \item[(iii)]
In addition, we may assume without loss of generality that 
the next three rows have the following form:
\[
M_2=
\left(\begin{array}{cc|ccc|ccc|ccc|ccc}
0 & + & - & 0 & 0 &  &  & & &  &  &  &    & \\ 
0 & + & 0 & - & 0 &  & A& & & B&  &  & C  & \\
0 & + & 0 & 0 & - &  &  & & &  &  &  &    & \\
\end{array}\right),
\]
where $A$ is a $3 \times 3$ permutation matrix,
$B$ is some $3 \times 3$ matrix and 
$C$ is some $3 \times (2n-11)$ matrix.
Let $M(A,B,C)$ denote the matrix
\[
\left(\begin{array}{c}
M_1 \\
M_2
\end{array}\right).
\]
If $A'=P_1AP_1^{-1}$ for some $3 \times 3$ permutation
matrix $P_1$,
then 
%% \[
%% PM(A,B,C)P^{-1}=M(A',B,C)
%% \]
%% where 
\[
PM(A,B,C)P^{-1}=M(A',B,C),
\]
where 
\[
P=
\left(\begin{array}{cccc}
I_2 &    &    &   \\
    & P_1&    &   \\
    &    & P_1&   \\
    &    &    & I_{2n-8}
\end{array}\right).
\]
This means that it is sufficient to consider the matrix $A$
up to conjugacy in the symmetric group of degree $3$,
so we assume $A=I_3,A_2$ or $A_3$, where
\[
A_2=
\left(\begin{array}{ccc}
+ & 0 & 0\\
0 & 0 & +\\
0 & + & 0
\end{array}\right) \text{ and }
A_3=
\left(\begin{array}{ccc}
0 & 0 & +\\
+ & 0 & 0\\
0 & + & 0
\end{array}\right).
\]

\begin{itemize}
\item Case $A=I_3$:\\
From the orthogonality of rows,
\[
B=
\left(\begin{array}{ccc}
0 & 0 & 0\\
0 & 0 & 0\\
0 & 0 & 0
\end{array}\right)
\text{ and } 
C=
\left(\begin{array}{ccc|c}
 + & + & 0 &0\cdots0\\
 - & 0 & - &0\cdots0\\
 0 & - & + &0\cdots0
\end{array}\right).
\]
%% where $O_3$ is the $3 \times 3$ zero matrix.
Moreover, the matrix $M(A,B,C)$ is uniquely extended to
\[
W=
\left(\begin{array}{ccc|c}
  &          & &  \\
  &  D(16,5) & & O\\
  &          & &  \\
\hline
  &   O      & & * \\
\end{array}\right),
\]
up to equivalence, where $O$ is the zero matrix.

\item Case $A=A_2$: \\
From the orthogonality of rows,
\[
B=
\left(\begin{array}{ccc}
0 & 0 & 0\\
0 & 0 & -\\
0 & 0 & +
\end{array}\right) \text{ and }
C=
\left(\begin{array}{cc|c}
 + & + & 0\cdots0\\
 0 & - & 0\cdots0\\
 - & 0 & 0\cdots0
\end{array}\right).
\]
Moreover, the matrix $M(A,B,C)$ is uniquely extended to
\[
W=
\left(\begin{array}{ccc|c}
  &          & &  \\
  & W_{14,5} & & O\\
  &          & &  \\
\hline
  &   O      & & * \\
\end{array}\right),
\]
up to equivalence, where $W_{14,5}$ is given in
Figure~\ref{Fig:1214-5}.

\item Case $A=A_3$: \\
%% [ 0  1 -1  0  0  0  0  1  0 -1  0]  1
%% [ 0  1  0 -1  0  1  0  0  0  1 -1]  0
%% [ 0  1  0  0 -1  0  1  0  0  0  1] -1
%% 
%% [ 0  1 -1  0  0  0  0  1 -1  0 -1]  0
%% [ 0  1  0 -1  0  1  0  0  1  0  0]  1
%% [ 0  1  0  0 -1  0  1  0  0  0  1] -1
%% 
%% [ 0  1 -1  0  0  0  0  1  0 -1  0]  1
%% [ 0  1  0 -1  0  1  0  0  1  0  0] -1
%% [ 0  1  0  0 -1  0  1  0 -1  1  0]  0
From the orthogonality of rows, $B$ must be one of 
the following three matrices:
\[
\left(\begin{array}{ccc}
 0 & - & 0\\
 0 & + & -\\
 0 & 0 & +
\end{array}\right),
\left(\begin{array}{ccc}
 - & 0 & -\\
 + & 0 & 0\\
 0 & 0 & +
\end{array}\right) 
\text{ and }
\left(\begin{array}{ccc}
 0 & - & 0\\
 + & 0 & 0\\
 - & + & 0
\end{array}\right).
\]
Then $C$ can be considered as  
\[
\left(\begin{array}{c|c}
  +& 0\cdots0\\
  0& 0\cdots0\\
  -& 0\cdots0
\end{array}\right),
\left(\begin{array}{c|c}
  0& 0\cdots0\\
  +& 0\cdots0\\
  -& 0\cdots0
\end{array}\right) 
\text{ and }
\left(\begin{array}{c|c}
  +& 0\cdots0\\
  -& 0\cdots0\\
  0& 0\cdots0
\end{array}\right), 
\]
respectively.
Moreover, for each case
the matrix $M(A,B,C)$ is uniquely extended to
\[
W=
\left(\begin{array}{ccc|c}
  &          & &  \\
  & W_{12,5} & & O\\
  &          & &  \\
\hline
  &   O      & & * \\
\end{array}\right),
\]
up to equivalence, where $W_{12,5}$ is given in
Figure~\ref{Fig:1214-5}.
\end{itemize}
Therefore, $W$ contains $W_{12,5}$, $W_{14,5}$ or
$D(16,5)$ as a direct summand.
\end{proof}

\begin{rem}
For order $14$, it follows from~\cite[Theorem 5]{CRS86}
that there are two inequivalent weighing matrices of
weight $5$, namely, $E(14,5)$ and $W(6,5)\oplus W(8,5)$
in~\cite{CRS86}.
On the other hand, the table in~\cite[Appendix B]{CRS86}
lists the number of inequivalent weighing matrices of
weight $5$ to be three, and the missing matrix is
denoted by $D(14,5)$ which, however, is not defined in~\cite{CRS86}.
%% However, it was listed in \cite[Appendix B]{CRS86} 
%% that the number of inequivalent weighing matrices of
%% weight $5$ is three and the missing matrix is
%% denoted by $D(14,5)$ which is not listed in \cite{CRS86}.
\end{rem}

\begin{rem} % new remark %
Let $R=(r_{ij})$ be the square matrix of order
$n$ with $r_{ij}=1$ if $i+j-1=n$ and $0$ otherwise.
If $A_1$ and $A_2$ are circulant matrices of order $n$
with entries $0,\pm1$ satisfying $A_1A_1^T+A_2A_2^T=kI$, then
the matrices
\[
W_1=\left(\begin{array}{ccc}
A_1 & A_2 \\
-A_2^T & A_1^T
\end{array}\right)
\text{ and }
W_2=\left(\begin{array}{ccc}
A_1 & A_2R \\
-A_2R & A_1
\end{array}\right)
\]
are weighing matrices of order $2n$ and weight 
$k$~\cite[Proposition~4.46]{GS}.
Kotsireas and Koukouvinos~\cite{KK09} claim that all
weighing matrices of the form $W_1$ or $W_2$
are found by an exhaustive search for $n \le 11$.
Although the results of their search are not given,
this means that they must have found the weighing matrix
$W_{14,5}$, since it is equivalent to the weighing matrix
$W_1$ where $A_1$ and $A_2$ are the circulant matrices
with first rows
\[
( 1,  0,  0,  0,  0,  0,  0) \text{ and }
(-1,  1,  1,  0,  1,  0,  0),
\]
respectively. We verified that no weighing matrices $W_1,W_2$
constructed from two circulant matrices $A_1$ and $A_2$
are equivalent to $W_{12,5}$.
This was done by finding all weighing matrices of the form
$W_1$ and $W_2$ by an exhaustive search.
\end{rem}

\begin{rem}
Let $W$ be any of $W_{12,5}, W_{14,5}$ and $D(16,5)$.
Then $W^T$ also satisfies (b).
Let $\overline{W}$ be the $(1,0)$-matrix obtained from $W$
by changing $-1$ to $1$ in the entries.
Then $\overline{W}$ is the incidence matrix of a semibiplane
(see~\cite{Wild} for the definition of a semibiplane).
The three semibiplanes obtained in this way are given 
in~\cite[Proposition 15]{Wild}.
\end{rem}

By the above lemma, we have the following revised classification for
weight $5$.
See~\cite{CRS86} for the definitions of the weighing matrices
$W(6,5)$, $W(8,5)$, $E(4t_i+2,5)$ and  
$F(4t_j+4,5)$.

\begin{thm}\label{thm:wt5}
Any weighing matrix of order $2n$ and weight $5$ is
equivalent to
\begin{multline*}
\bigoplus_{i_1} W(6,5) 
\bigoplus_{i_2} W(8,5) 
\bigoplus_{i_3} W_{12,5} 
\bigoplus_{i_4} W_{14,5} 
\\
\bigoplus_{i_5} D(16,5)
\bigoplus_{t_i} \big(\bigoplus_{i_6} E(4t_i+2,5)\big)
%% \bigoplus_{t_j} \big(\bigoplus_{i_8} F(4t_j+4,5)\big)
%% \end{multline*}
%% where $t_i \ge 2$ and $t_j \ge 2$, and 
\bigoplus_{t_j} \big(\bigoplus_{i_8} F(4t_j+4,5)\big),
\end{multline*}
where $t_i \ge 2$ and $t_j \ge 2$.
%where $t_i \ge 2$ and $t_j \ge 2$, and 
%$W(6,5)$, $W(8,5)$, $E(4t_i+2,5)$ and $F(4t_j+4,5)$
%are given in \cite{CRS86}.
\end{thm}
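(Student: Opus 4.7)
The plan is to argue by induction on the order $2n$ of $W$, at each step peeling off a direct summand from the list in the theorem and applying the induction hypothesis to the complementary block. For a weighing matrix $W$ of weight $5$, I distinguish the three mutually exclusive cases (a), (b), (c) from the proof of [CRS86, Theorem 5], according to how many other rows of $W$ intersect a fixed row in four places.

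In cases (a) and (c), the analysis carried out in [CRS86, Theorem 5] is correct and produces a direct summand equivalent to one of $W(6,5)$, $W(8,5)$, $E(4t_i+2,5)$, or $F(4t_j+4,5)$; for those cases I simply invoke [CRS86]. In case (b), Lemma \ref{lem:wt5} supplies a direct summand equivalent to $W_{12,5}$, $W_{14,5}$, or $D(16,5)$. This is precisely the ingredient missed in [CRS86], whose proof incorrectly claimed $D(16,5)$ to be the only weighing matrix of weight $5$ satisfying condition (b). After peeling off the summand, the complementary block is again a weighing matrix of weight $5$ of strictly smaller (even) order (or is empty, supplying the base of the induction), so the induction hypothesis yields a decomposition of that block. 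Concatenating the summand with this decomposition produces an expression of the form asserted in the theorem.

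The substantive content is thus entirely concentrated in Lemma \ref{lem:wt5}; once that case analysis is in hand the remainder is a routine induction grafted onto the already-correct portions of [CRS86, Theorem 5]. The main obstacle, therefore, is not the inductive step itself but rather the need to verify that the parts of the CRS86 argument being reused (the treatment of cases (a) and (c), including the enumeration of the infinite families $E$ and $F$) are valid as originally stated, so that the only repair required is the substitution of Lemma \ref{lem:wt5} for the flawed treatment of case (b).
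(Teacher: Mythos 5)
Your proposal is correct and matches the paper's approach: the paper derives the theorem directly from Lemma \ref{lem:wt5} together with the (correct) treatment of cases (a) and (c) in \cite[Theorem 5]{CRS86}, exactly as you describe. You merely make explicit the peeling-off induction that the paper leaves implicit in the phrase ``By the above lemma.''
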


Table~\ref{Tab:W5} is a revised table of a classification
of weighing matrices of order $2n \le 20$
and weight $5$ in~\cite[Appendix B]{CRS86}.

%%%%%%%%%%%%%%%%%%%%%%%%%%%%%%%%%%%%%%%%%%%%%%%%
\begin{table}[thb]
\caption{Weighing matrices of weight $5$}
\label{Tab:W5}
\begin{center}
%{\small
{\footnotesize
%{\scriptsize
\begin{tabular}{c|c|l}
\noalign{\hrule height0.8pt}
$2n$ & $\#$ & \multicolumn{1}{c}{Matrices} \\
\hline 
 6& 1& $W(6,5)$  \\
 8& 1& $W(8,5)$  \\
10& 1& $E(10,5)$ \\
12& 3& $W_{12,5}$, $F(12,5)$, $W(6,5)\oplus W(6,5)$ \\
14& 3& $W_{14,5}$, $E(14,5)$, $W(6,5)\oplus W(8,5)$ \\ 
16& 4& $D(16,5)$, $F(16,5)$, $W(8,5)\oplus W(8,5)$, $W(6,5)\oplus E(10,5)$\\
18& 5& $E(18,5)$, $W(6,5)\oplus W_{12,5}$, $W(6,5)\oplus F(12,5)$,
$W(6,5)\oplus W(6,5)\oplus W(6,5)$\\ 
&& $W(8,5)\oplus F(10,5)$\\
20& 7& $F(20,5)$, $W(6,5) \oplus W_{14,5}$, $W(6,5) \oplus E(14,5)$, 
       $W(6,5) \oplus W(6,5)\oplus W(8,5)$ \\
&& $W(8,5) \oplus W_{12,5}$, $W(8,5) \oplus F(12,5)$, 
$E(10,5) \oplus E(10,5)$\\
\noalign{\hrule height0.8pt}
   \end{tabular}
}
\end{center}
\end{table}
%%%%%%%%%%%%%%%%%%%%%%%%%%%%%%%%%%%%%%%%%%%%%%%%

%%%%%%%%%%%%%%%%%%%%%%%%%%%%%%%%%%%%%%%%%%%%%%%%%%%%%%%%%%%%%%%%%
\section{Weighing matrices of order 12}\label{Sec:12}
The classification of weighing matrices of order $12$
was done in~\cite{CRS86} and \cite{O89}.
In this section, we give a revised list of
weighing matrices of weights $6,8,10$.
These classifications were done by considering 
self-dual $\ZZ_k$-codes of length $12$, where 
$k=3,4$ and $5$, respectively, using the method in
Section~\ref{Sec:Method}.
These approaches are similar, and we give details only for
weight $6$.

%%%%%%%%%%%%%%%%%%%%
\subsection{Weight 6}
As described in Section~\ref{Sec:Method},
any weighing matrix of order $12$ and weight $6$
can be regarded  as $12$ codewords of weight $6$ in 
some ternary self-dual code of length $12$.
There are three inequivalent ternary self-dual codes of
length $12$ \cite[Table~1]{MPS}, 
and these codes are denoted by $G_{12}$, $4C_3(12)$
and $3E_4$.
The code $G_{12}$ has minimum weight $6$ and the other codes
have minimum weight $3$, and 
the numbers of codewords of weight $6$ in 
these codes are $264$, $240$ and $192$, respectively.
% By considering sets of $12$ codewords of weight $6$ in 
% these codes, we verified that there are 
% $8$ inequivalent weighing matrices of
% order $12$ and weight $6$, 
% using the method in Section~\ref{Sec:Method}.
By considering sets of $12$ codewords of weight $6$ in 
these codes, we have the following classification
of weighing matrices of
order $12$ and weight $6$, 
using the method in Section~\ref{Sec:Method}.

\begin{thm}\label{thm:12-6}
There are $8$ inequivalent weighing matrices of
order $12$ and weight $6$.
\end{thm}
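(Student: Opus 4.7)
The plan is to apply the classification method of Section~\ref{Sec:Method} with $m=3$. Since $6=3\cdot 2$ while $9\nmid 6$, Proposition~\ref{prop:SD} forces $C_3(W)$ to be a self-dual $\FF_3$-code of length $12$ for every weighing matrix $W$ of order $12$ and weight $6$. Consequently, every such $W$ arises as a choice of $12$ codewords (up to sign), each of Hamming weight $6$, lying in one of the three inequivalent ternary self-dual codes of length $12$, namely $G_{12}$, $4C_3(12)$ and $3E_4$, with weight-$6$ codeword counts $264$, $240$ and $192$ as recorded above.

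For each such code $C$, I would construct the graph $\Gamma_C$ on the set of pairs $\{x,-x\}$ of weight-$6$ codewords of $C$, with edges given by the integer-lift orthogonality condition $\overline{x}\,\overline{y}^T=0$ described in Section~\ref{Sec:Method}. I would then enumerate the $12$-cliques of $\Gamma_C$ modulo the induced action of $\Aut(C)$, using the clique-search facilities of \textsc{Magma}, and convert each orbit representative into a candidate weighing matrix by stacking the $12$ signed codewords as rows. The main computational obstacle is the $12$-clique enumeration in $\Gamma_{G_{12}}$, whose vertex set has cardinality $132$; this is rendered tractable by the large automorphism group of the ternary Golay code, which drastically reduces the number of orbits that must be inspected.

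Because a weighing matrix need not generate a maximal self-orthogonal code, the same equivalence class of matrices may surface in the clique lists of more than one of $G_{12},4C_3(12),3E_4$, or even more than once within the same list. To cull such duplicates I would invoke Lemma~\ref{lem:equiv}: attach to each candidate $W$ the incidence structure $\cD(W)$ on $24$ points, verify in \textsc{Magma} that the canonical involution $\tau_0$ is the unique fixed-point-free involutive automorphism of $\cD(W)$ satisfying conditions (i) and (ii) up to conjugacy in $\Aut(\cD(W))$, and then reduce equivalence testing on the weighing matrices to equivalence testing on these incidence structures. Should the uniqueness hypothesis of Lemma~\ref{lem:equiv} fail for some candidate, the fallback is to test equivalence of those weighing matrices directly by searching for monomial transformations. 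I expect this reduction to leave exactly $8$ equivalence classes.

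As a consistency check, I would compute $|\Aut(W)|$ for each of the $8$ surviving matrices and tabulate their intersection patterns and $g$-distributions, comparing the resulting invariants with those reported for the corresponding matrices in~\cite{CRS86,O89}, and verifying that no ordered row/column monomial transformation identifies any two of the eight representatives.
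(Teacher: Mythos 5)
Your proposal is correct and follows essentially the same route as the paper: reduce via Proposition~\ref{prop:SD} to the three ternary self-dual codes $G_{12}$, $4C_3(12)$ and $3E_4$ of length $12$, enumerate $12$-cliques in the orthogonality graph on their weight-$6$ codewords modulo $\Aut(C)$, and eliminate duplicates across codes by the incidence-structure equivalence test of Lemma~\ref{lem:equiv}. The paper's final inequivalence check is done via the $g$-distributions in Table~\ref{Tab:12-6}, which matches your proposed consistency check.
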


%According to~\cite[Theorem 5]{O89},
%there are $7$ inequivalent weighing matrices of
%order $12$ and weight $6$.
The number of inequivalent weighing matrices of
order $12$ and weight $6$ was incorrectly reported as $7$ 
in~\cite[Theorem 5]{O89}.
The $7$ inequivalent matrices in~\cite[Theorem 5]{O89}
are denoted by 
$E^*_1$,
$E^*_2$,
$(E^*_2)^T$,
$E^*_5$,
$E^*_{14}$,
$(E^*_{14})^T$ and 
$G^*_2$.
The missing matrix $W_{12,6}$ is listed in
Figure~\ref{Fig:12}.
%However, we claim that the classification 
%in~\cite[Theorem 5]{O89}
%% weighing matrices of order $12$ and weight $6$
%misses the matrix $W_{12,6}$, which is listed in
%Figure~\ref{Fig:12}.
We remark that $W_{12,6}$ and $W_{12,6}^T$ are equivalent.

\begin{figure}[t]
\begin{center}
%{\small
{\footnotesize
%{\scriptsize
\begin{align*}
W_{12,6}&=
\left(\begin{array}{rrrrrrrrrrrr}
 1&  1&  1&  1&  1&  1&  0&  0&  0&  0&  0&  0\\
 1&  1& -1&  0& -1&  0&  0&  1&  1&  0&  0&  0\\
 1& -1&  0&  1& -1&  0&  0& -1&  0&  0&  0&  1\\
 1& -1&  0&  0&  0&  0&  0&  1& -1&  0&  1& -1\\
 1&  0&  0& -1&  1& -1&  0&  0&  0&  1&  0&  1\\
 1&  0&  0& -1&  0&  0&  0& -1&  0& -1& -1& -1\\
 0&  1&  1&  0& -1& -1&  1&  0& -1&  0&  0&  0\\
 0&  1& -1&  0&  0&  0& -1& -1& -1&  0&  1&  0\\
 0&  0&  1& -1& -1&  1& -1&  0&  0&  1&  0&  0\\
 0&  0&  1&  0&  0& -1& -1&  0&  1& -1&  1&  0\\
 0&  0&  0&  1&  0& -1& -1&  0&  0&  1& -1& -1\\
 0&  0&  0&  0&  0&  0&  1& -1&  1&  1&  1& -1\\
\end{array}\right)
\\
W_{12,10}&=
\left(\begin{array}{rrrrrrrrrrrr}
 1&  1&  1&  1&  1&  1&  1&  1&  1&  1&  0&  0\\
 1&  1&  1&  1&  1& -1& -1& -1& -1& -1&  0&  0\\
 0&  1&  1& -1& -1&  0&  1& -1&  1& -1&  1&  1\\
-1&  0&  1&  1& -1& -1&  0&  1& -1&  1&  1&  1\\
-1& -1&  0&  1&  1&  1& -1&  0&  1& -1&  1&  1\\
 1& -1& -1&  0&  1& -1&  1& -1&  0&  1&  1&  1\\
 1&  1& -1& -1&  0&  1& -1&  1& -1&  0&  1&  1\\
 0&  1& -1&  1& -1&  0& -1& -1&  1&  1& -1&  1\\
-1&  0&  1& -1&  1&  1&  0& -1& -1&  1& -1&  1\\
 1& -1&  0&  1& -1&  1&  1&  0& -1& -1& -1&  1\\
-1&  1& -1&  0&  1& -1&  1&  1&  0& -1& -1&  1\\
 1& -1&  1& -1&  0& -1& -1&  1&  1&  0& -1&  1\\
%%%%
%% 1& -1&  0&  1& -1&  1& -1&  1&  0& -1& -1&  1\\
%% 1&  1& -1& -1&  0&  1&  1& -1&  1& -1&  0&  1\\
%% 1&  0&  1& -1&  1& -1& -1&  0& -1& -1&  1&  1\\
%% 0&  1&  1&  1&  1&  1&  0&  1&  1&  1&  1&  1\\
%% 1&  1& -1&  0&  1& -1& -1&  1&  1&  0& -1& -1\\
%% 1& -1& -1&  0&  1&  1&  1&  1& -1&  0&  1& -1\\
%% 1& -1&  1& -1&  0&  1& -1& -1&  1&  1&  0& -1\\
%% 1&  0&  1&  1& -1& -1&  1&  0&  1& -1&  1& -1\\
%% 0&  1&  1&  1&  1&  1&  0& -1& -1& -1& -1& -1\\
%% 1&  1&  1& -1& -1&  0&  1&  1& -1&  1& -1&  0\\
%% 1& -1&  0&  1&  1& -1&  1& -1&  0&  1& -1&  1\\
%% 1&  1& -1&  1& -1&  0& -1& -1& -1&  1&  1&  0\\
\end{array}\right)
\end{align*}
\caption{Weighing matrices $W_{12,6}$ and $W_{12,10}$}
\label{Fig:12}
}
\end{center}
\end{figure}

\begin{rem}
It is claimed in the proof of~\cite[Lemma 31]{O89} that
there are $4$ weighing matrices which are constructed from
Case II up to equivalence.
The matrix $W_{12,6}$ is also constructed from Case II,
and hence there are $5$ weighing matrices which are constructed from
Case II up to equivalence. 
\end{rem}

%% Let $W$ be a weighing matrix of order $n$.
%% The number $\sum_{s=1}^n w_{is}^2 w_{js}^2 w_{ks}^2$
%% is called the generalized intersection number and 
%% the following set of generalized intersection numbers
%% \[
%% N(t)= \Big| \Big\{ \{i,j,k\} \mid 
%% \sum_{s=1}^n w_{is}^2 w_{js}^2 w_{ks}^2 =t,
%% 1 \le i,j,k \le n\ (i \not=j, i \not=k, j \not=k) \Big\} \Big|
%% \]
%% is called the $g$-distribution (see~\cite{OM98}).

%%
In Table~\ref{Tab:12-6}, we list $g$-distributions
$N(i)$ $(i=0,1,\ldots,6)$ for the $7$
matrices given in~\cite[Theorem 5]{O89}
along with the new matrix $W_{12,6}$.
Table~\ref{Tab:12-6} also shows that the 
$8$ weighing matrices are inequivalent.
By Proposition~\ref{prop:SD},
the ternary codes $C_3(W)$ generated by the rows of these
matrices $W$ are self-dual,
%The self-dual codes $C_3(W)$  are listed in
and the identifications with those
appearing in~\cite{MPS} are given in
the last column of Table~\ref{Tab:12-6}.

%%%%%%%%%%%%%%%%%%%%%%%%%%%%%%%%%%%%%%%%%%%%%%%%
\begin{table}[thb]
\caption{Weighing matrices of order $12$ and weight $6$}
\label{Tab:12-6}
\begin{center}
{\small
%{\footnotesize
%{\scriptsize
\begin{tabular}{c|ccccccc|c}
\noalign{\hrule height0.8pt}
$W$ & $N(0)$& $N(1)$& $N(2)$& $N(3)$& $N(4)$& $N(5)$& $N(6)$ 
& $C_3(W)$\\
\hline 
$E^*_1$       &396&720&180& 240& 180& 0&  0& $G_{12}$ \\ %in \cite{MPS}\\
$E^*_2$       &516&432&420& 144& 204& 0&  0& $G_{12}$ \\ %in \cite{MPS}\\
$(E^*_2)^T$   &432&528&504&  48& 192& 0& 12& $4C_3(12)$ \\ %in \cite{MPS}\\
$E^*_5$       &492&432&468& 144& 180& 0&  0& $G_{12}$ \\ %in \cite{MPS}\\
$E^*_{14}$    &708&  0&756&   0& 252& 0&  0& $G_{12}$ \\ %in \cite{MPS}\\
$(E^*_{14})^T$&384&576&576&   0& 144& 0& 36& $3E_4$ \\ %in \cite{MPS}\\
$G^*_2$       &432&576&456&   0& 240& 0& 12& $4C_3(12)$ \\ %in \cite{MPS}\\
$W_{12,6}$    &516&360&540& 120& 180& 0&  0& $G_{12}$ \\ %in \cite{MPS}\\
\noalign{\hrule height0.8pt}
   \end{tabular}
}
\end{center}
\end{table}
%%%%%%%%%%%%%%%%%%%%%%%%%%%%%%%%%%%%%%%%%%%%%%%%

%%%%%%%%%%%%%%%
\subsection{Weight 8}
According to~\cite[Theorem 3]{O89},
there are $6$ inequivalent weighing matrices of
order $12$ and weight $8$.
However,
our method in Section~\ref{Sec:Method},
applied to the known classification of self-dual 
$\ZZ_4$-codes of length $12$ given in \cite{FGLP-Z4},
leads to the following classification of weighing
matrices of order $12$ and weight $8$.

%% Hence, we have the following:

\begin{thm}\label{thm:12-8}
There are $7$ inequivalent weighing matrices of
order $12$ and weight $8$.
\end{thm}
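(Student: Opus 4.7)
The plan is to apply the classification method of Section~\ref{Sec:Method} with $m=4$, following the same steps that were carried out for weight~$6$ in the previous subsection. Since $m=4$ divides $k=8$, Lemma~\ref{lem:SO} shows that $C_4(W)$ is a self-orthogonal $\ZZ_4$-code, and since maximal self-orthogonal $\ZZ_4$-codes coincide with self-dual $\ZZ_4$-codes (as noted in Section~\ref{Sec:code}), we have $C_4(W)\subseteq C$ for some self-dual $\ZZ_4$-code $C$ of length $12$. The complete list of such codes is provided by~\cite{FGLP-Z4}, and this is the starting input.

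For each inequivalent self-dual $\ZZ_4$-code $C$ of length $12$, I would enumerate in {\sc Magma} the pairs $\{x,-x\}$ with $x\in C$, $n_0(x)=4$, and $n_1(x)+n_{-1}(x)=8$, and then build the graph $\Gamma$ of Section~\ref{Sec:Method} whose edges encode orthogonality of the $\{0,\pm1\}$-lifts. Computing a set of orbit representatives of $12$-cliques of $\Gamma$ under $\Aut(C)$ then yields a list of candidate weighing matrices $W$ of order $12$ and weight $8$ with $C_4(W)\subseteq C$. By completeness of the classification of \cite{FGLP-Z4} together with the exhaustive clique enumeration, every equivalence class of the desired weighing matrices is represented in the pooled list obtained by varying $C$.

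Because a weighing matrix need not generate a self-dual $\ZZ_4$-code, the pooled list may contain matrices that are equivalent but arise from inequivalent ambient codes $C$. To eliminate duplicates, I would construct the incidence structure $\cD(W)$ from Lemma~\ref{lem:equiv} for each candidate and test pairwise equivalence by incidence-structure isomorphism in {\sc Magma}, first verifying the uniqueness-of-$\tau_0$ hypothesis of Lemma~\ref{lem:equiv} so that equivalence of incidence structures transfers to equivalence of weighing matrices. As a final sanity check, I would tabulate the $g$-distributions and intersection patterns of the surviving candidates, in the same spirit as Table~\ref{Tab:12-6}, to confirm that the seven surviving representatives really are pairwise inequivalent.

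The main obstacle is purely computational: the graphs $\Gamma$ attached to some of the self-dual $\ZZ_4$-codes of length $12$ can be large, so that the $12$-clique search and the subsequent orbit reduction under $\Aut(C)$ are the costly steps. Conceptually, nothing new beyond the argument used for Theorem~\ref{thm:12-6} is required. The outcome produces one additional equivalence class beyond the six reported in \cite[Theorem~3]{O89}, and I would record the missing matrix explicitly along with the $g$-distributions, in complete analogy with the treatment of $W_{12,6}$ in Figure~\ref{Fig:12} and Table~\ref{Tab:12-6}.
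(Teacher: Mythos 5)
Your proposal follows essentially the same route as the paper: apply the method of Section~\ref{Sec:Method} with $m=4$ to the classification of self-dual $\ZZ_4$-codes of length $12$ from \cite{FGLP-Z4}, reduce $12$-cliques of $\Gamma$ modulo $\Aut(C)$, and then remove duplicates via the incidence structures of Lemma~\ref{lem:equiv}. The only cosmetic difference is the final inequivalence certificate: the paper tabulates the number of weight-$6$ codewords of the $\ZZ_4$-code generated by $(\,I_{12}\,,\,W\,)$ rather than $g$-distributions, but this does not change the argument.
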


\begin{rem}
The $6$ inequivalent matrices in~\cite[Theorem 3]{O89}
are denoted by $A_1,A_3,A_6,A_7,A_8$ and $A_9$.
However, $A_{11}$ appeared in the proof of~\cite[Theorem 3]{O89} 
is inequivalent to any of the matrices $A_i$
$(i=1,3,6,7,8,9)$.
This is an error in~\cite[Theorem 3]{O89}.
\end{rem}

Let $W$ be a weighing matrix of order $12$ and weight $8$.
Let $D_4(W)$ be the $\ZZ_4$-code with generator matrix
$(\ I_{12}\ ,\ W \ )$, where the matrix $W$ is regarded
as a matrix over $\ZZ_4$.
The numbers $\#D_6$ of codewords of weight $6$ of
$D_4(W)$, listed in Table~\ref{Tab:12-8},
were found by the {\sc Magma} function {\tt NumberOfWords}.
These numbers also show that the 
$7$ weighing matrices are inequivalent,

%%%%%%%%%%%%%%%%%%%%%%%%%%%%%%%%%%%%%%%%%%%%%%%%
\begin{table}[thb]
\caption{Weighing matrices of order $12$ and weight $8$}
\label{Tab:12-8}
\begin{center}
{\small
%{\footnotesize
%{\scriptsize
\begin{tabular}{c|ccccccc}
\noalign{\hrule height0.8pt}
$W$    & $A_1$  & $A_3$ & $A_6$ & $A_7$ & $A_8$ & $A_9$ & $A_{11}$  \\
\hline
$\#D_6$&  2852  &  1764 & 1092  &  932  & 1124  &  1700 & 1220 \\
\noalign{\hrule height0.8pt}
   \end{tabular}
}
\end{center}
\end{table}
%%%%%%%%%%%%%%%%%%%%%%%%%%%%%%%%%%%%%%%%%%%%%%%%

%%%%%%%%%%%%%%%
\subsection{Weight 10}
According to~\cite[Theorem 1]{O89},
there are $4$ inequivalent weighing matrices of
order $12$ and weight $10$.
However, 
our method in Section~\ref{Sec:Method},
applied to the known classification of self-dual 
$\FF_5$-codes of length $12$ given in \cite{LPS}, 
leads to the following classification of weighing
matrices of order $12$ and weight $10$.

\begin{thm}\label{thm:12-10}
There are $5$ inequivalent weighing matrices of
order $12$ and weight $10$.
\end{thm}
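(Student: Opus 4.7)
The plan is to apply the classification method of Section~\ref{Sec:Method} with $m=5$ and $t=2$, in exactly the same way as was done for weight $6$ (Theorem~\ref{thm:12-6}) and weight $8$ (Theorem~\ref{thm:12-8}). Let $W$ be a weighing matrix of order $12$ and weight $10$. Since $5$ divides $10$ but $25$ does not, Proposition~\ref{prop:SD} implies that $C_5(W)$ is a self-dual $\FF_5$-code of length $12$. Therefore every equivalence class of such $W$ is represented by a set of $12$ codewords $x$ satisfying $n_0(x)=2$ and $n_1(x)+n_{-1}(x)=10$, lying in some self-dual $\FF_5$-code of length $12$.

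First I would take the classification of self-dual $\FF_5$-codes of length $12$ from~\cite{LPS} (see Table~\ref{Tab:MSO}), and for each such code $C$ enumerate the vertex set $V$ of pairs $\{x,-x\}$ with $x\in C$, $n_0(x)=2$, $n_1(x)+n_{-1}(x)=10$. Following Section~\ref{Sec:Method}, I then build the graph $\Gamma=\Gamma(C)$ on $V$, where $\{x,-x\}$ and $\{y,-y\}$ are adjacent precisely when the lifts $\overline{x},\overline{y}\in\{0,\pm1\}^{12}$ satisfy $\overline{x}\,\overline{y}^T=0$. The $12$-cliques of $\Gamma$ are exactly the weighing matrices whose rows generate a subcode of $C$.

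Next, using \textsc{Magma} I would compute the $\Aut(C)$-orbits on the $12$-cliques of $\Gamma$ for each of the codes $C$, and convert one representative of each orbit into an explicit weighing matrix. Since a single equivalence class of weighing matrices can occur inside several inequivalent ambient codes $C$, the resulting list may contain duplicates. I would eliminate these by testing equivalence of the associated incidence structures $\cD(W)$ and invoking Lemma~\ref{lem:equiv}; this requires first verifying in each case that the involution $\tau_0=(1,-1)(2,-2)\cdots(12,-12)$ is, up to conjugacy in $\Aut(\cD(W))$, the unique fixed-point-free involutive automorphism of $\cD(W)$ satisfying conditions (i) and (ii), a check identical in spirit to those used at weights $6$ and $8$. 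To detect inequivalence quickly I would also compute the $g$-distribution and $|\Aut(\cD(W))|$ as discriminating invariants.

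The conceptual content is light; the main obstacle will be purely computational, namely the feasibility of the $12$-clique enumeration in $\Gamma(C)$ for every self-dual $\FF_5$-code $C$ of length $12$. The matrix $W_{12,10}$ displayed in Figure~\ref{Fig:12} is expected to appear as one of the five representatives, thereby identifying the weighing matrix missing from the list of~\cite[Theorem~1]{O89}; once the full enumeration and the equivalence testing via Lemma~\ref{lem:equiv} are complete, the count of $5$ inequivalent weighing matrices follows directly.
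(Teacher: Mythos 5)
Your proposal matches the paper's approach essentially verbatim: apply the method of Section~\ref{Sec:Method} with $m=5$ to the classification of self-dual $\FF_5$-codes of length $12$ from~\cite{LPS}, enumerate $12$-cliques up to $\Aut(C)$, and remove duplicates via the incidence structures and Lemma~\ref{lem:equiv}, recovering $W_{12,10}$ as the matrix missing from~\cite[Theorem~1]{O89}. The only cosmetic difference is the choice of distinguishing invariants (the paper uses $C_5(W)$ and the weight-$8$ count in $D_5(W)$ rather than the $g$-distribution), which does not affect correctness.
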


The $4$ inequivalent matrices in~\cite[Theorem 1]{O89}
are denoted by $A_1,A_4,A_7$ and $A_8$.
The missing matrix $W_{12,10}$ is listed in
Figure~\ref{Fig:12}.
We remark that $W_{12,10}$ and $W_{12,10}^T$ are equivalent.

\begin{rem}
It is claimed in the proof of~\cite[Lemma 11]{O89} that
there are only $7$ vectors such that the matrices $Y_i$
are normal matrices of level $4$.
We verified that this is incorrect and there is one missing
vector, namely, the fourth row of $W_{12,10}$.
Moreover, the $7 \times 12$ matrix $\bar{Y}_4$
consisting of the first $7$ rows of $W_{12,10}$ should be
considered in~\cite[Lemma 12]{O89}
as a possible matrix of level $7$.
\end{rem}

%% The matrices $A_1$, $A_4$, $A_7$, $A_8$ and $W_{12,10}$
%% have the identical $g$-distributions
%% \[
%% (N(0),N(1),\ldots,N(10))
%% = (0, 0, 0, 0, 0, 0, 960, 0, 720, 0, 36).
%% \]
In Table~\ref{Tab:12-10}, we list 
the self-dual $\FF_5$-codes $C_5(W)$ generated by the rows of 
these matrices $W$,
in the notation of~\cite{LPS}.
%% The numbers $\#C_4$ and $\#C_5$
%% of codewords of weight $4$ and $5$ are also listed.
This shows that $W_{12,6}$ must be inequivalent to
any of the other $4$ matrices.
We consider $\FF_5$-codes $D_5(W)$
with generator matrices 
$(\ I_{12}\ ,\ W \ )$, where the matrices $W$ are regarded
as matrices over $\FF_5$.
The numbers $\#D_8$ of codewords of weight $8$ 
are listed in Table~\ref{Tab:12-10},
%% If $W$ and $W'$ are equivalent then $D_5(W)$ and $D_5(W')$ are equivalent.
which also shows that the $5$ weighing matrices are inequivalent.
These numbers 
were found by the {\sc Magma} function {\tt NumberOfWords}.

%%%%%%%%%%%%%%%%%%%%%%%%%%%%%%%%%%%%%%%%%%%%%%%%
\begin{table}[thb]
\caption{Weighing matrices of order $12$ and weight $10$}
\label{Tab:12-10}
\begin{center}
{\small
%{\footnotesize
%{\scriptsize
\begin{tabular}{c|c|c}
\noalign{\hrule height0.8pt}
$W$ & $C_5(W)$ &  $\#D_8$ \\ 
\hline 
$A_1$       & $F_6^2$ %in \cite{LPS}  
& 3696\\
$A_4$       & $F_{12}$ %in \cite{LPS} 
& 3000\\
$A_7$       & $F_{12}$ %in \cite{LPS} 
& 4080\\
$A_8$       & $F_6^2$ %in \cite{LPS}  
& 4560\\
$W_{12,10}$ & $K_{12}$ %in \cite{LPS} 
& 3792\\
\noalign{\hrule height0.8pt}
   \end{tabular}
}
\end{center}
\end{table}
%%%%%%%%%%%%%%%%%%%%%%%%%%%%%%%%%%%%%%%%%%%%%%%%

%%%%%%%%%%%%%%%
\subsection{Other weights}
By Theorem~\ref{thm:wt5} (see Table~\ref{Tab:W5}),
there are $3$ inequivalent weighing matrices of
order $12$ and weight $5$, namely,
$W_{12,5}$, $F(12,5)$, and $W(6,5)\oplus W(6,5)$.
%% In Table \ref{Tab:12-5}, we list the $g$-distributions
%% $N(t)$ $(t=0,1,\ldots,5)$ for the above $3$ matrices.
%The self-dual codes $C_5(W)$ generated by the rows of these matrices
%$W$ are listed in Table \ref{Tab:12-5}.
In Table~\ref{Tab:12-5}, we list 
the self-dual $\FF_5$-codes $C_5(W)$ generated by the rows of 
these matrices $W$,
in the notation of~\cite{LPS}.
\begin{table}[thb]
\caption{Weighing matrices of order $12$ and weight $5$}
\label{Tab:12-5}
\begin{center}
{\small
%{\footnotesize
%{\scriptsize
\begin{tabular}{c|ccc}
%% \noalign{\hrule height0.8pt}
%% \hline 
\noalign{\hrule height0.8pt}
$W$      & $W(6,5)\oplus W(6,5)$& $F(12,5)$             & $W_{12,5}$           \\
\hline
$C_5(W)$ & $F_6^2$ %in \cite{LPS}
& $F_{12}$ %in \cite{LPS}
& $K_{12}$ %in \cite{LPS} 
\\
%% 15
%% $W$ & $N(0)$& $N(1)$& $N(2)$& $N(3)$& $N(4)$& $N(5)$ & $C_5(W)$\\
%% \hline 
%% $W(6,5)\oplus W(6,5)$&1296&  0&  0&240&180&0& $F_6^2$ in \cite{LPS} \\ %% 7
%% $F(12,5)$            & 816&432&432&  0& 36&0& $F_{12}$ in \cite{LPS} \\%% 10 
%% $W_{12,5}$           & 636&720&360&  0&  0&0& $K_{12}$ in \cite{LPS} \\%% 15
\noalign{\hrule height0.8pt}
   \end{tabular}
}
\end{center}
\end{table}
%%%%%%%%%%%%%%%%%%%%%%%%%%%%%%%%%%%%%%%%%%%%%%%%

%% For weights $7$ and $9$, our classifications are the same
%% as the classifications in \cite{O89}.
For weights $7$ and $9$, we verified that
the classifications in~\cite{O89} are correct,
using the classification of 
self-dual $\FF_p$-codes of length $12$, 
where $p=7$ and $3$, respectively.
%The numbers $\#$ of inequivalent weighing matrices of
%order $12$ and weight $k$ and their references
%are listed in Table \ref{Tab:12}.
Table~\ref{Tab:12} summarizes a revised classification of
%% weighing matrices order $12$ and weight $k$.
weighing matrices of order $12$.

%%%%%%%%%%%%%%%%%%%%%%%%%%%%%%%%%%%%%%%%%%%%%%%%
\begin{table}[thb]
\caption{Classification of weighing matrices of order $12$}
\label{Tab:12}
\begin{center}
{\small
%{\footnotesize
%{\scriptsize
\begin{tabular}{c|c|l||c|c|l}
\noalign{\hrule height0.8pt}
Weight &  $\#$ & \multicolumn{1}{c||}{References} &
Weight &  $\#$ & \multicolumn{1}{c}{References} \\
\hline 
 1 & 1 & \cite{CRS86}         & 7 & 3 & \cite{O89} %(Section \ref{Sec:12})  
 \\
 2 & 1 & \cite{CRS86}         & 8 & 7 & Theorem~\ref{thm:12-8}
 %Section \ref{Sec:12}   
 \\
 3 & 1 & \cite{CRS86}         & 9 & 4 & \cite{O89} %(Section \ref{Sec:12})  
 \\
 4 & 5 & \cite{CRS86}         &10 & 5 & Theorem~\ref{thm:12-10}
 %Section \ref{Sec:12}
 \\
 5 & 3 & Theorem~\ref{thm:wt5}&11 & 1 & \cite{CRS86} \\ 
 6 & 8 & Theorem~\ref{thm:12-6} &12 & 1 & \cite{Todd}  \\
\noalign{\hrule height0.8pt}
   \end{tabular}
}
\end{center}
\end{table}
%%%%%%%%%%%%%%%%%%%%%%%%%%%%%%%%%%%%%%%%%%%%%%%%

%%%%%%%%%%%%%%%%%
\section{Weighing matrices of orders 14, 15 and 17}\label{Sec:141517}

We continue a classification of weighing matrices 
using the method in Section~\ref{Sec:Method}.
Then we have the following  classification of
weighing matrices of order $n$ and weight $k$
for 
\begin{equation}\label{eq:141517}
(n,k)=(14,8), (14,9), (14,10), (15,9) \text{ and } (17,9),
\end{equation}
using the classification of maximal
self-orthogonal $\ZZ_m$-codes of length $n$
(see Section~\ref{Sec:code}), where 
$m=4,3,5,3$ and $3$, respectively.
Since approaches are similar to that used in 
Section~\ref{Sec:12},
we only list in Table~\ref{Tab:141517} the 
numbers $\#$ of inequivalent 
weighing matrices of order $n$ and weight $k$
for $(n,k)$ listed in (\ref{eq:141517}).
Hence, our result completes a classification of 
weighing matrices of orders up to $15$ and order $17$.

%% \begin{prop}
%% There are $37$ inequivalent weighing matrices
%% of order $15$ and weight $9$.
%% \end{prop}

%%%%%%%%%%%%%%%%%%%%%%%%%%%%%%%%%%%%%%%%%%%%%%%%
\begin{table}[thb]
\caption{Classification of weighing matrices of orders $14,15$ and $17$}
\label{Tab:141517}
\begin{center}
{\small
%{\footnotesize
%{\scriptsize
\begin{tabular}{c|c|c|l||c|c|c|l}
\noalign{\hrule height0.8pt}
Order & Weight &  $\#$ & \multicolumn{1}{c||}{References} &
Order & Weight &  $\#$ & \multicolumn{1}{c}{References} \\
\hline 
14 & 1 &  1 & \cite{CRS86}       &15 & 1 & 1  & \cite{CRS86} \\
   & 2 &  1 & \cite{CRS86}       &   & 4 & 6  & \cite{CRS86} \\
   & 4 &  3 & \cite{CRS86}       &   & 9 & 37 & Section~\ref{Sec:141517}  \\
   & 5 &  3 & Theorem~\ref{thm:wt5}   &17 & 1 & 1    & \cite{CRS86} \\
   & 8 & 66 & Section~\ref{Sec:141517}&   & 4 & 3    & \cite{CRS86} \\
   & 9 &  7 & Section~\ref{Sec:141517}&   & 9 & 2360 & Section~\ref{Sec:141517}  \\
   &10 & 19 & Section~\ref{Sec:141517}&   &16 & 1    & \cite{CRS86} \\
   &13 &  1 & \cite{CRS86}            & &&\\
\noalign{\hrule height0.8pt}
   \end{tabular}
}
\end{center}
\end{table}
%%%%%%%%%%%%%%%%%%%%%%%%%%%%%%%%%%%%%%%%%%%%%%%%

Now, we compare our classifications with the known classifications
for $(n,k)=(14,8)$ and $(17,9)$.
According to~\cite[Theorem 3.9]{O92},
there are $65$ inequivalent weighing matrices of 
order $14$ and weight $8$.
Using the classification of self-dual $\ZZ_4$-codes of length $14$,
we classified weighing matrices of order $14$ and weight $8$, and
we claim that the classification 
in~\cite[Theorem 3.9]{O92} misses the 
matrix $W_{14,8}$, which is listed in
Figure~\ref{Fig:148}.
%\[
%W_{14,8}=
%\left(\begin{array}{rrrrrrrrrrrrrr}
%1&  1&  1&  1&  1&  1&  1&  1&  0&  0&  0&  0&  0&  0\\
%1& -1&  1&  1& -1& -1&  0&  0&  1&  1&  0&  0&  0&  0\\
%1& -1&  0&  0&  0&  0&  1& -1& -1& -1&  1&  1&  0&  0\\
%1&  1&  0& -1&  0&  0&  0& -1&  1&  0&  1& -1&  1&  0\\
%1&  0& -1&  1&  0&  0& -1&  0&  0& -1& -1&  0&  1&  1\\
%1&  0& -1&  0&  0&  1&  0& -1&  0&  1& -1&  0& -1& -1\\
%1&  0&  0& -1&  1& -1& -1&  1&  0&  0&  0&  1&  0& -1\\
%1&  0&  0& -1& -1&  0&  0&  1& -1&  0&  0& -1& -1&  1\\
%0&  1&  1&  0&  0&  0& -1& -1& -1&  1&  0&  1&  0&  1\\
%0&  1& -1&  0&  0& -1&  1&  0&  1&  0&  0&  1& -1&  1\\
%0&  1&  0&  1& -1&  0& -1&  0&  0& -1&  1&  0& -1& -1\\
%0&  1&  0&  0& -1& -1&  1&  0& -1&  0& -1&  0&  1& -1\\
%0&  0&  1& -1& -1&  1&  0&  0&  1& -1& -1&  1&  0&  0\\
%0&  0&  1&  0&  1& -1&  0& -1&  0& -1& -1& -1& -1&  0
%\end{array}\right).
%\]
We remark that $W_{14,8}$ and $W_{14,8}^T$ are equivalent.
Hence, we have the following:

\begin{thm}
There are $66$ inequivalent weighing matrices of order $14$ and 
weight $8$.
\end{thm}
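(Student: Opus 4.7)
The plan is to apply the classification machinery of Section~\ref{Sec:Method} with the parameters $n=14$, $k=8$, and $m=4$. Since $k=8$ is divisible by $m=4$, Lemma~\ref{lem:SO} implies that the $\ZZ_4$-code $C_4(W)$ generated by the rows of any weighing matrix $W$ of order $14$ and weight $8$ is self-orthogonal, and hence is contained in a maximal self-orthogonal $\ZZ_4$-code of length $14$. As noted in Section~\ref{Sec:code}, such maximal codes are automatically self-dual, and the classification of self-dual $\ZZ_4$-codes of length $14$ is given in~\cite{FGLP-Z4}. So I would begin by importing that list of codes together with their automorphism groups.

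Next, for each inequivalent self-dual $\ZZ_4$-code $C$ of length $14$, I would form the graph $\Gamma_C$ whose vertices are the pairs $\{x,-x\}$ with $x\in C$, $n_0(x)=6$ and $n_1(x)+n_{-1}(x)=8$, and whose edges are the pairs satisfying $\overline{x}\,\overline{y}^T=0$ in $\ZZ$. The weighing matrices of order $14$ and weight $8$ whose row span (mod $4$) lies in $C$ correspond exactly to the $14$-cliques of $\Gamma_C$. Using {\sc Magma}, I would enumerate a set of representatives of these cliques under the natural action of $\Aut(C)$; ranging over all inequivalent $C$ produces a finite list of weighing matrices which is guaranteed to contain a representative of every equivalence class.

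Since a weighing matrix need not generate a maximal self-orthogonal code, the list above may contain several representatives of the same equivalence class of weighing matrices, obtained through different enveloping codes. To reduce the list, I would construct the incidence structure $\cD(W)$ of each candidate $W$ as described in Section~\ref{Sec:Method}, compute $\Aut(\cD(W))$ in {\sc Magma}, and verify in each case that the involution $\tau_0=(1,-1)(2,-2)\cdots(14,-14)$ is, up to conjugacy in $\Aut(\cD(W))$, the unique fixed-point-free involutive automorphism of $\cD(W)$ satisfying conditions~(i) and~(ii). Once that hypothesis is verified, Lemma~\ref{lem:equiv} reduces equivalence of weighing matrices to isomorphism of incidence structures, which can be tested directly. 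Pruning duplicates then yields the final count.

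The main obstacle is the sheer computational size: the number of self-dual $\ZZ_4$-codes of length $14$ is substantial and each associated graph $\Gamma_C$ can have a very large vertex set, so the $14$-clique enumeration is the bottleneck and must be carried out carefully using the $\Aut(C)$-symmetry to prune the search. After the enumeration and the incidence-structure deduplication, I expect to obtain exactly $66$ inequivalent weighing matrices. Comparing this list against the $65$ matrices in~\cite[Theorem~3.9]{O92}, I would identify the missing matrix and exhibit it explicitly as $W_{14,8}$ in Figure~\ref{Fig:148}, completing the proof.
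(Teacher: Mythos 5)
Your proposal follows essentially the same route as the paper: embed the rows of $W$ in a self-dual $\ZZ_4$-code of length $14$ (maximal self-orthogonal $\ZZ_4$-codes being automatically self-dual), enumerate $14$-cliques of the associated graph up to $\Aut(C)$ over all codes from~\cite{FGLP-Z4}, and remove duplicates via the incidence structures $\cD(W)$ and Lemma~\ref{lem:equiv}, arriving at $66$ matrices and identifying $W_{14,8}$ as the one missing from~\cite[Theorem~3.9]{O92}. This matches the paper's (computational) proof, so no further comparison is needed.
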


\begin{figure}[t]
\begin{center}
%{\small
{\footnotesize
%{\scriptsize
\[
W_{14,8}=
\left(\begin{array}{rrrrrrrrrrrrrr}
1&  1&  1&  1&  1&  1&  1&  1&  0&  0&  0&  0&  0&  0\\
1& -1&  1&  1& -1& -1&  0&  0&  1&  1&  0&  0&  0&  0\\
1& -1&  0&  0&  0&  0&  1& -1& -1& -1&  1&  1&  0&  0\\
1&  1&  0& -1&  0&  0&  0& -1&  1&  0&  1& -1&  1&  0\\
1&  0& -1&  1&  0&  0& -1&  0&  0& -1& -1&  0&  1&  1\\
1&  0& -1&  0&  0&  1&  0& -1&  0&  1& -1&  0& -1& -1\\
1&  0&  0& -1&  1& -1& -1&  1&  0&  0&  0&  1&  0& -1\\
1&  0&  0& -1& -1&  0&  0&  1& -1&  0&  0& -1& -1&  1\\
0&  1&  1&  0&  0&  0& -1& -1& -1&  1&  0&  1&  0&  1\\
0&  1& -1&  0&  0& -1&  1&  0&  1&  0&  0&  1& -1&  1\\
0&  1&  0&  1& -1&  0& -1&  0&  0& -1&  1&  0& -1& -1\\
0&  1&  0&  0& -1& -1&  1&  0& -1&  0& -1&  0&  1& -1\\
0&  0&  1& -1& -1&  1&  0&  0&  1& -1& -1&  1&  0&  0\\
0&  0&  1&  0&  1& -1&  0& -1&  0& -1& -1& -1& -1&  0
\end{array}\right).
\]
\caption{Weighing matrix $W_{14,8}$}
\label{Fig:148}
}
\end{center}
\end{figure}

\begin{rem}
The intersection patterns of $W_{14,8}$ and
$W_{14,8}^T$ are 
\[
(x_2,x_4,x_6,x_8)=(0,11,2,0) 
\]
which is the same as ${\mathbf c}_{25}$ in~\cite[p.~139]{O92}.
Hence, $W_{14,8}$ is of Type ${\mathbf c}_{25}$
in the sense of~\cite{O92}.
It is claimed in~\cite[Theorem 3.6]{O92} that
a matrix of Type ${\mathbf c}_{25}$ is equivalent
to some matrix of  Type ${\mathbf c}_{i}$ $(i \ne 25)$.
This is an error.
\end{rem}

% According to~\cite{OM98},
% there are $925$ inequivalent 
% weighing matrices of order $17$ and weight $9$ with
% intersection number $8$.
% However, we verified that
% only $517$ of the $2360$ weighing matrices of order $17$ and weight $9$
% have intersection numbers $8$.
% %%% 
% In~\cite{OM98}, only eight matrices are listed.
% Thus, we did not compare
% our classification with the classification in~\cite{OM98}.

Among the weighing matrices of order $17$ and weight $9$,
Ohmori and Miyamoto~\cite{OM98} claimed to classify those with
intersection number $8$, and they found exactly $925$ such
matrices. However, we verified that only $517$ of the $2360$ weighing
matrices of order $17$ and weight $9$ have intersection number $8$.
Since their list of $925$ weighing matrices is not available,
we are unable to compare their result with ours.

%%%%%%%%%%%%%%%%%
\section{Other orders and weights}\label{Sec:other}

For orders $n \ge 13$ and weights $k \ge 6$, 
%% we completed a classification of
we classified
weighing matrices of some orders $n$ and weights $k$
listed in Table~\ref{Tab:O}
using the classification of maximal
self-orthogonal $\FF_p$-codes of length $n$ given in 
Table~\ref{Tab:MSO}.
Since approaches are similar to that used in 
Section~\ref{Sec:12},
we only list in Table~\ref{Tab:O} the numbers $\#$
of inequivalent weighing matrices for which we classified,
and the primes $p$.
Also, we list in the same table the orders and weights
for which we checked the known classifications
by our classification method, along with references.

%%%%%%%%%%%%%%%%%%%%%%%%%%%%%%%%%%%%%%%%%%%%%%%%
\begin{table}[thbp]
\caption{Other orders and weights}
\label{Tab:O}
\begin{center}
{\small
%{\footnotesize
%{\scriptsize
\begin{tabular}{c|c|c|l||c|c|c|l}
\noalign{\hrule height0.8pt}
$(n,k)$ & $\#$ & $p$ & \multicolumn{1}{c||}{References} &
$(n,k)$ & $\#$ & $p$ & \multicolumn{1}{c}{References} \\
\hline 
$(13, 9)$ & 8   & 3 &\cite{O93} &$(16,15)$ &  1     & 3 &\cite{CRS86} \\
$(16, 6)$ & 30  & 3 &           &$(18, 9)$ &  11891 & 3 & \\
$(16, 9)$ & 704 & 3 &           &$(20, 6)$ &  49    & 3 & \\
$(16,10)$ & 670 & 5 &           &$(20,18)$ &  53    & 3 & \\
$(16,12)$ & 279 & 3 &           &$(24, 6)$ &  190   & 3 & \\
\noalign{\hrule height0.8pt}
   \end{tabular}
}
\end{center}
\end{table}
%%%%%%%%%%%%%%%%%%%%%%%%%%%%%%%%%%%%%%%%%%%%%%%%

%\appendix
%\input{magma1.tex}
%\input{magma2.tex}
%\input{magma12.tex}
%\input{lem-equiv.tex}

%%%%%%%%%%%%%%%%%%%  References  %%%%%%%%%%%%%%%%%%%%%%%%

%%%%%%%%%%%%%%%%%%%%%%%%%%%%%%%%%

\end{document}